\theoremstyle{theorem}
\newtheorem{theorem}{Theorem}
\newtheorem{lemma}{Lemma}
\newtheorem{proposition}{Proposition}
\newtheorem{conjecture}{Conjecture}
\newtheorem{corollary}{Corollary}
\newtheorem{question}{Question}
\theoremstyle{definition}
\newtheorem{remark}{Remark}
\newtheorem*{notation}{Notation}
\theoremstyle{definition}
\newtheorem*{definition}{Definition}
\title[three]{
	Coefficients of the monodromy matrices\\
	of one-parameter families of double octic Calabi-Yau threefolds\\
	at a half-conifold point}
\subjclass[2010]{Primary: 14J32; Secondary: 11F67, 14D05}
\keywords{Calabi-Yau threefolds, modular forms, period integrals, Picard-Fuchs operator}
\author{Tymoteusz Chmiel}
\address{Jagiellonian University,
ul. {\L}ojasiewicza 6,
30-348 Krak\'ow,
Poland}
\begin{document}
\maketitle
\vspace{-8mm}
\begin{abstract}
Doran and Morgan introduced in \cite{Doran-Morgan} a rational basis for the monodromy group of the Picard-Fuchs operator of a hypergeometric family of Calabi-Yau threefolds. In this paper we compute numerically the transition matrix
between a generalization of the Doran-Morgan basis and the Frobenius basis at
a half-conifold point of a one-parameter family of double octic
Calabi-Yau threefolds. We identify the entries of this matrix as
rational functions in the special values $L(f,1)$ and $L(f,2)$ of the corresponding modular form $f$ and one constant. We also present related results concerning the rank of the group of period integrals generated by the action of the monodromy group on the conifold period.
\end{abstract}

\section*{Introduction}

The Picard-Fuchs equation is a differential equation satisfied by periods of a family of algebraic manifolds. Since the fundamental paper \cite{Candelas-de la Ossa-Green-Prakes} Picard-Fuchs operators associated with one-dimensional families of Calabi-Yau threefolds have been an object of intensive research, partially due to their connections with mirror symmetry. One of the most appealing aspects of this connection is the observation that in many cases numerical invariants of a Calabi-Yau threefold can be identified using the Picard-Fuchs operator of a mirror family. It is conjectured that they appear as coefficients in the transition matrix $T^\mathcal{B}_{F_m}$ between certain rational basis of solutions $\mathcal{B}$ and the Frobenius basis $F_m$ at a point of maximal unipotent monodromy. Furthermore, these conjectures imply that with respect to the basis $F_m$ the monodromy group is a subgroup of $GL(4,\mathbb{Q}(\tfrac{\zeta(3)}{2\pi i})]$.

We present results of similar type concerning the coefficients of the transition matrix $T^\mathcal{B}_{F_c}$ between the Doran-Morgan basis $\mathcal{B}$ and the Frobenius basis $F_c$ at a conifold singularity $s$. The former is a global, rational basis, while the latter is defined locally by the Frobenius method. We were able to recognize coefficients of the transition matrix $T^\mathcal{B}_{F_c}$, up to one unidentified constant, in the case of half-conifold singularities appearing in families of double octic Calabi-Yau threefolds. We also discuss an example of a singularity of type $\tfrac{1}{4}$C; in this case we were able to identify the transition matrix completely. As such, our results propose a potential candidate for a field $K$ such that the monodromy matrices with respect to $F_c$ lie in $GL(4,K)$.

Both a motivation and a way of approaching this problem is provided by our results presented in \cite{Chmiel}. There we use Picard-Fuchs operator $\mathcal{P}$ for numerical identification of period integrals of certain rigid Calabi-Yau threefolds. The method requires the existence of a conifold point $s$ at which the family degenerates to a singular model of a rigid Calabi-Yau manifold $X$. In Section \ref{s:values}, using analytic continuation of the conifold period around a MUM singularity, we introduce a group $\mathcal{L}^0(\mathcal{P},s)$. Up to commensurability, the lattice of period integrals of $X$ is a subgroup of $\mathcal{L}^0(\mathcal{P},s)$ and if $\operatorname{rank}(\mathcal{L}^0(\mathcal{P},s))=2$, they are equal. In several cases $\operatorname{rank}(\mathcal{L}^0(\mathcal{P},s))=3$ and period integrals of the singular fiber can be identified as well.

In this paper we shall study two natural questions arising in this situation. Numerical data suggests that if $\mathcal{L}^0(\mathcal{P},t_0)$ is of rank $2$, the special values $L(X,1)$ and $\tfrac{L(X,2)}{2\pi i}$ of the $L$-function of $X$ generate $\mathcal{L}^0(\mathcal{P},s)\otimes\mathbb{Q}$. Therefore this case is of special interest: can we find some conditions on the singularity $s$ which guarantee that $\operatorname{rank}(\mathcal{L}^0(\mathcal{P},s))=2$? Furthermore, assume that instead of considering monodromies around a MUM point, we consider the action of the entire fundamental group on the conifold period. This leads to a definition of $\mathcal{L}(\mathcal{P},s)$. Is the inclusion $\mathcal{L}^0(\mathcal{P},s)\subset\mathcal{L}(\mathcal{P},s)$ strict? In particular, if $\operatorname{rank}(\mathcal{L}^0(\mathcal{P},t_0))=2$ and it is generated by periods of $X$, can the period integrals of the singular fiber still be identified by considering $\mathcal{L}(\mathcal{P},s)$?

It turns out that by identifying the form of the transition matrix $T^\mathcal{B}_{F_c}$ both of these questions can be answered, albeit not in full generality. Conifold singularities are characterised by the local exponents $(0,1,1,2)$. In this paper we consider more special case of \textit{half-conifold points} with local exponents $(0,\tfrac{1}{2},\tfrac{1}{2},1)$. Via a quadratic pull-back, half-conifold singularities can be considered more symmetric counterparts of the conifold points. Local solutions near half-conifold points behave more controllably with respect to the action of the monodromy group and this allows us to answer the considered questions. In particular, we show that for half-conifold singularities the rank of the group $\mathcal{L}(\mathcal{P},s)$ is (at most) 2 and deduce from that the $\mathbb{Q}$-vector spaces generated by elements of $\mathcal{L}(\mathcal{P},s)$ and $\mathcal{L}^0(\mathcal{P},s)$ are equal.

The struture of this paper is as follows. In the first section we introduce Picard-Fuchs operators and their monodromy representations. Section \ref{s:DM} presents a construction of the generalized Doran-Morgan basis, which is a rational basis for the action of the monodromy group. Sections \ref{s:trans} and \ref{s:frob} are concerned with local bases of solutions: the former contains definitions of singularities we consider, while the latter focuses on the form of the Frobenius bases at these singular points. In section \ref{s:mod} we present basic facts concerning modular forms and the modularity theorem for Calabi-Yau threefolds. In section \ref{s:values} we recall results form \cite{Chmiel} and formulate the problems we consider. The last three sections contain main results of the paper. Section \ref{s:theorem} contains a proof of the theorem concerning the dimension of the $\mathbb{Q}$-vector space spanned by the values of the conifold period after the monodromy action. Section \ref{s:coeff} presents results on the exact values of the coefficients of the transition matrix $T^\mathcal{B}_{F_c}$ between the Doran-Morgan basis and the Frobenius basis at a half-conifold point, while the section \ref{s:14} deals with the special case of a $\tfrac{1}{4}$C singularity. The Appendix contains precise form of the results presented in section \ref{s:coeff}.

\section{Monodromy representation of a Picard-Fuchs operator}\label{s:PF_and_mon}

A \textit{Calabi-Yau manifold} is a smooth complex projective variety of dimension $d$ such that $\omega_X\simeq\mathcal{O}_X$ and $H^i(X,\mathcal{O}_X)=0$ for $i=1,\dots,d-1$. Basic invariants of any complex projective manifold are its \textit{Hodge numbers} $h^{i,j}(X):=\text{dim}(H^j(X,\Omega^i_X))$. They are often presented in the form of a diagram called the \textit{Hodge diamond} of $X$. In this paper we will focus on the case of Calabi-Yau threefolds. For such a manifold the Hodge diamond has the following form:
$$
\begin{matrix}
1\\
0\quad 0\\
0\quad h^{1,1}\quad 0\\
1\quad h^{1,2}\quad h^{1,2}\quad 1\\
0\quad h^{1,1}\quad 0\\
0\quad 0\\
1\\
\end{matrix}
$$
The Hodge number $h^{1,1}(X)$ is equal to the Picard number $\rho(X)$, while by the Bogomolov-Tian-Todorov unobstructedness theorem $h^{1,2}(X)$ is the dimension of a smooth deformation space (the \textit{Kuranishi space}) of the manifold $X$.

When $h^{1,2}(X)=0$, the Calabi-Yau threefold $X$ has no deformations of the complex structure and thus is called \textit{rigid}. When $h^{1,2}(X)=1$, there is a flat proper morphism $\mathfrak{X}\rightarrow S$ onto a smooth curve $S$ with a distinguished point $t_0\in S$ and a fixed isomorphism $X_{t_0}\simeq X$. In this situation we say that $X$ belongs to a \textit{one-dimensional family} $\mathfrak{X}=(X_t)_{t\in S}$.

Let $X\simeq X_{t_0}$ be a Calabi-Yau threefold deforming in a one-dimensional family. Take a family of holomorphic $3$-forms $\omega_t\in H^{3,0}(X_t)$, $t\in S$, depending holomorphically on $t$, and a smooth family of $3$-cycles $\gamma_t\in H_3(X_t,\mathbb{Z})$. Given such data we can define so called \textit{period functions}:
$$y(t):=\displaystyle\int_{\gamma_t}\omega_t,$$
which play crucial role in studying the family $\mathfrak{X}$. Since the space $H^3(X_{t_0},\mathbb{C})$ has dimension $4$, the elements
$$\left(\nabla_{\frac{\partial}{\partial t}}^4\omega_{t}\right)\Big\rvert_{t_0},\ 
\left(\nabla_{\frac{\partial}{\partial t}}^3\omega_{t}\right)\Big\rvert_{t_0},\ 
\left(\nabla_{\frac{\partial}{\partial t}}^2\omega_{t}\right)\Big\rvert_{t_0},\ 
\left(\nabla_{\frac{\partial}{\partial t}}\omega_t,\right)\Big\rvert_{t_0},\ 
\omega_{t_0}$$
are linearly dependent. As a consequence, the period function $y$ satisfies a differential equation $\mathcal{P}y=0$ for some fourth order differential operator $\mathcal{P}$, called the \textit{Picard-Fuchs operator} of the family $\mathfrak{X}$.

The Picard-Fuchs operator has only regular singularities, i.e. it is a Fuchsian operator. Crucial role in the study of Fuchsian operators is played by the \textit{monodromy representation}, which we now briefly recall. Since in this paper we  consider only the case on families over
Zariski-open sets in $\mathbb{P}^1$, for simplicity we shall restrict our
considerations to this situation.

Let $\mathcal{P}$ be a Fuchsian differential operator of order $4$ and let $\mathcal{S}\subset\mathbb{P}^1$ denote the set of its singular points. Let $V=\mathcal{S}ol(t_0)$ be the space of solutions of $\mathcal{P}=0$ in a small neighbourhood of some point $t_0\not\in\mathcal{S}$. Any solution of $\mathcal{P}=0$ can be continued analytically along any loop $\gamma\in\pi_1(\mathbb{P}^1\setminus\mathcal{S},t_0)$ and the result is still a solution. Since continuations along homotopic paths are equal, we obtain a representation $M:\pi_1(\mathbb{P}^1\setminus\mathcal{S},t_0)\rightarrow GL(V)$, called \textit{the monodromy representation}, whose image is the monodromy group $Mon(\mathcal{P})$. After choosing a basis $\mathcal{B}$ of $V$ and thus fixing an isomorphism $V\simeq\mathbb{C}^4$, we obtain an associated representation $M^{\mathcal{B}}:\pi_1(\mathbb{P}^1\setminus\mathcal{S},t_0)\rightarrow GL(4,\mathbb{C})$ which identifies $Mon(\mathcal{P})$ with a subgroup of $GL(4,\mathbb{C})$.

To compute the monodromy group exactly seems to be very difficult problem and has only been done in a few special cases (\cite{Doran-Morgan}, \cite{Chen-Yang-Yui}). Thus one usually relies on numerical approximations. In such situation it is very convenient to have a basis $\mathcal{B}$ of $\mathcal{S}ol(t_0)$ such that $Mon^\mathcal{B}(\mathcal{P})\subset GL(4,\mathbb{Q})$, which makes numerical identification of the entries of the monodromy matrices much easier. In the next section we shall describe an example of such a rational basis, the \textit{(generalized) Doran-Morgan basis}.

\begin{notation}
$Mon(\mathcal{P})$ denotes the monodromy group of $\mathcal{P}$ with respect to a fixed base point $t_0$, which we suppress from the notation. For any $\gamma\in\pi_1(\mathbb{P}^1\setminus\mathcal{S},t_0)$ we denote by $M_\gamma$ the action of monodromy along the loop $\gamma$. If we choose a basis $\mathcal{B}$ of $\mathcal{S}ol(t_0)$, we denote the corresponding monodromy group, which is now a subgroup of $GL(4,\mathbb{C})$, by $Mon^\mathcal{B}(\mathcal{P}):=\operatorname{im}(M^\mathcal{B})$. Similarly if $A\in Mon(\mathcal{P})$, we denote by $A^\mathcal{B}\in GL(4,\mathbb{C})$ the matrix of $A$ with respect to $\mathcal{B}$. By $T^{\mathcal{A}}_{\mathcal{B}}$ we denote the transition matrix between two bases $\mathcal{A}$ and $\mathcal{B}$. Finally, by $e_k$, $k=1,\cdots,4$, we denote the $k$th standard basis vector of $\mathbb{C}^4$.
\end{notation}

\section{Generalized Doran-Morgan basis}\label{s:DM}

Let $\mathcal{P}$ be a differential operator of order 4 and assume that there exists an element $M\in Mon(\mathcal{P})$ such that $(M-\operatorname{Id})^4=0$ but $(M-\operatorname{Id})^3\neq0$. Such an element will be called \textit{maximally unipotent}. In particular, if we put $N:=M-\operatorname{Id}$, then $\text{dim}(\text{ker}(N))=1$. Pick a solution $v\in\mathcal{S}ol(t_0)$ which is a generator of $\text{ker}(N)$. For any $w\in\mathcal{S}ol(t_0)$ we have $N^4(w)=0$, so that $N^3(w)\in\text{ker}(N)=\text{span}(v)$. Denote by $d(w)$ the uniquely determined complex number such that $N^3(w)=d(w)\cdot v$. For $A\in Mon(\mathcal{P})$ put $B:=A-\operatorname{Id}$ and define $d_A:=d(B(-v))$.

\begin{definition}
With the notation as above assume that $d_A\neq 0$. The \textit{generalized Doran-Morgan basis for $M$ and $A$} is the following set of solutions defined in a neighbourhood of a fixed base point $t_0$:
\begin{equation*}
\mathcal{B}_{M,A}:=\left\{B(-v),N(B(-v)),\frac{N^2(B(-v))}{d_A},v\right\}.
\end{equation*}
\end{definition}

Note that if $d_A=0$ for all $A\in Mon(\mathcal{P})$, then $N^3(A(v))=0$ for all $A\in Mon(\mathcal{P})$. Let $W:=\text{span}\big(\{A(v):A\in Mon(\mathcal{P})\}\big)$. It is easily seen that $W$ is a non-zero monodromy-invariant subspace of $\mathcal{S}ol(t_0)$. Since $W\subset\text{ker}(N^3)$, it is a proper subspace as well. Thus if the considered local system is irreducible, we can always find $A\in Mon(\mathcal{P})$ such that $d_A\neq0$ and consequently construct a generalized Doran-Morgan basis.

It is straightforward to verify the following:
\begin{proposition}
	$\mathcal{B}_{M,A}$ is a basis of solutions of $\mathcal{P}$ near $t_0$.
\end{proposition}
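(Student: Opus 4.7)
The plan is straightforward linear algebra, exploiting the hypothesis that $M$ is maximally unipotent. Since $\mathcal{P}$ has order $4$, the solution space $\mathcal{S}ol(t_0)$ has dimension $4$, so it suffices to show that the four elements of $\mathcal{B}_{M,A}$ are linearly independent.

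The key observation is that the nilpotency conditions $N^4=0$ and $N^3\neq 0$ on a $4$-dimensional space force $N$ to consist of a single Jordan block of size $4$. In particular, for any $w\in\mathcal{S}ol(t_0)$ with $N^3(w)\neq 0$, the iterates $\{w, N(w), N^2(w), N^3(w)\}$ form a basis: if $\sum_{k=0}^{3}c_k N^k(w)=0$, then applying $N^3$ kills all terms except $c_0 N^3(w)$, forcing $c_0=0$; successively applying $N^2$ and $N$ to the truncated relation then forces $c_1=0$ and $c_2=0$, leaving $c_3=0$.

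I would apply this with $w:=B(-v)$. By definition of $d_A$ we have $N^3(B(-v))=d_A\cdot v$, and the standing assumption $d_A\neq 0$ (together with $v\neq 0$, since $v$ generates $\ker N$) gives $N^3(w)\neq 0$. Hence $\{B(-v), N(B(-v)), N^2(B(-v)), d_A v\}$ is a basis of $\mathcal{S}ol(t_0)$.

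Finally, since $d_A\neq 0$, rescaling the third vector by $d_A^{-1}$ and replacing the fourth vector $d_A v$ by $v$ (equivalently, by $N^3(w)/d_A$) are invertible operations on the ordered tuple, so the resulting set
\[
\mathcal{B}_{M,A}=\left\{B(-v),\ N(B(-v)),\ \frac{N^2(B(-v))}{d_A},\ v\right\}
\]
is still a basis of $\mathcal{S}ol(t_0)$. There is no real obstacle here; the only point that requires any care is verifying that the ``single Jordan block'' structure of $N$ really follows from the two stated nilpotency conditions on a four-dimensional space, which is immediate from the argument above.
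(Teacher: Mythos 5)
Your argument is correct and is precisely the standard verification the paper omits as ``straightforward'': since $d_A\neq 0$ gives $N^3(B(-v))=d_A v\neq 0$, the cyclic-vector argument (applying $N^3$, then $N^2$, then $N$ to a vanishing linear combination) shows $\{B(-v),N(B(-v)),N^2(B(-v)),N^3(B(-v))\}$ is a basis of the four-dimensional space $\mathcal{S}ol(t_0)$, and the rescalings by $d_A^{-1}$ preserve this. No gaps.
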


\begin{remark}\label{remark:MUM}
	In the generalized Doran-Morgan basis $\mathcal{B}_{M,A}$ action of the monodromy operator $M$ has matrix of the following form:
	\begin{equation*}
	M^{\mathcal{B}_{M,A}}=\begin{pmatrix}
	1&0&0&0\\1&1&0&0\\0&d_A&1&0\\0&0&1&1\\
	\end{pmatrix}
	\end{equation*}
\end{remark}

After this general construction, let us now return to the situation when $\mathcal{P}$ is a Picard-Fuchs operator of a family of Calabi-Yau threefolds given by a morphism $\varphi:\mathcal{X}\rightarrow\mathbb{P}^1\setminus\mathcal{S}$ with fibers $X_t$. The local system associated to the differential equation $\mathcal{P}=0$, with fiber $\mathcal{S}ol(t)$ over $t\in\mathbb{P}^1\setminus\mathcal{S}$, is isomorphic to the local system $R^3\varphi_*\mathbb{C}$. The latter denotes the direct image of a constant sheaf $\mathbb{C}$ on the total space $\mathcal{X}$ and its fiber over $t\in\mathbb{P}^1\setminus\mathcal{S}$ is the cohomology group $H^3(X_t,\mathbb{C})$.

The local system $R^3\varphi_*\mathbb{C}$ contains a local system of rational vector spaces $R^3\varphi_*\mathbb{Q}$ with fibers $H^3(X_t,\mathbb{Q})$ such that $H^3(X_t,\mathbb{Q})\otimes\mathbb{C}=H^3(X_t,\mathbb{C})$. Fix a base point $t_0\not\in\mathcal{S}$. By the Ehresmann lemma the family $X_t$ is smoothly locally trivial and thus the subspace $H^3(X_{t_0},\mathbb{Q})$ is monodromy-invariant. We conclude that there exists a rational subspace $\mathcal{S}ol_{\mathbb{Q}}(t_0)\subset\mathcal{S}ol(t_0)$, which is preserved under the action of the monodromy group, such that $\mathcal{S}ol_{\mathbb{Q}}(t_0)\otimes\mathbb{C}=\mathcal{S}ol(t_0)$. Clearly with respect to any basis $\mathcal{B}$ of $\mathcal{S}ol_{\mathbb{Q}}(t_0)$ the monodromy group $Mon^{\mathcal{B}}(\mathcal{P})$ is a subgroup of $GL(4,\mathbb{Q})$.

Consider the generalized Doran-Morgan basis $\mathcal{B}_{M,A}$ for appropriate $M,A\in Mon(\mathcal{P})$. Its construction depends on the choice of the initial solution $v$. However, any two such choices differ by a multiplicative constant. Since other elements of the basis $\mathcal{B}_{M,A}$ are obtained from $v$ via the action of certain linear operators, it follows that the group $Mon^{\mathcal{B}_{M,A}}(\mathcal{P})$ is independent of the choice of $v$. Pick $0\neq v\in\mathcal{S}ol_\mathbb{Q}(t_0)\cap\operatorname{ker}(N)$. Since $\mathcal{S}ol_\mathbb{Q}(t_0)$ is monodromy-invariant, all other elements of $B_{M,A}$ will be elements of $\mathcal{S}ol_\mathbb{Q}(t_0)$ as well. Since as a basis of $\mathcal{S}ol(t_0)$ they are linearly independent, we see that for this choice of $v$ the generalized Doran-Morgan basis $B_{M,A}$ forms a basis of $\mathcal{S}ol_\mathbb{Q}(t_0)$. This way we obtain:

\begin{proposition}
The generalized Doran-Morgan basis is rational, i.e. $Mon^{\mathcal{B}_{M,A}}(\mathcal{P})\subset GL(4,\mathbb{Q})$.
\end{proposition}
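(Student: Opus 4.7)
The plan is to carry out the outline already sketched in the paragraph preceding the statement, verifying the two small points that actually need checking: that a nonzero rational solution lies in $\ker(N)$, and that the scalar $d_A$ is rational.

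First I would fix the rational structure $\mathcal{S}ol_\mathbb{Q}(t_0)\subset\mathcal{S}ol(t_0)$ coming from $R^3\varphi_*\mathbb{Q}$. Since $M\in Mon(\mathcal{P})$ preserves this rational subspace, so does $N=M-\operatorname{Id}$, and hence $\ker(N)$ inherits a $\mathbb{Q}$-structure: if $N_\mathbb{Q}$ denotes the restriction of $N$ to $\mathcal{S}ol_\mathbb{Q}(t_0)$, then $\ker(N_\mathbb{Q})\otimes_\mathbb{Q}\mathbb{C}=\ker(N)$. Since $\dim_\mathbb{C}\ker(N)=1$, the kernel of $N_\mathbb{Q}$ is one-dimensional over $\mathbb{Q}$, so I may pick a nonzero generator $v\in\mathcal{S}ol_\mathbb{Q}(t_0)\cap\ker(N)$. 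The excerpt already notes that $Mon^{\mathcal{B}_{M,A}}(\mathcal{P})$ is insensitive to rescaling $v$, so this specific choice is allowed.

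Next I would verify that every entry of $\mathcal{B}_{M,A}$ lies in $\mathcal{S}ol_\mathbb{Q}(t_0)$. The vector $B(-v)=-(A-\operatorname{Id})(v)$ is rational because $A$ preserves the rational structure, and then $N(B(-v))$ and $N^2(B(-v))$ are rational for the same reason. The scalar $d_A$ is by definition the unique complex number with $N^3(B(-v))=d_A\cdot v$; but both $N^3(B(-v))$ and $v$ lie in $\mathcal{S}ol_\mathbb{Q}(t_0)$, and $v$ spans the one-dimensional rational kernel of $N$ over $\mathbb{Q}$, which forces $d_A\in\mathbb{Q}$. Consequently $N^2(B(-v))/d_A$ is rational as well.

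Finally, the preceding proposition gives linear independence of the four basis vectors over $\mathbb{C}$, and since they all lie in $\mathcal{S}ol_\mathbb{Q}(t_0)$, they form a $\mathbb{Q}$-basis of this four-dimensional rational vector space. As the monodromy group preserves $\mathcal{S}ol_\mathbb{Q}(t_0)$, every monodromy matrix written in this basis is in $GL(4,\mathbb{Q})$, which is the claim. I do not anticipate a substantive obstacle; the only step that demands a moment's care is the rationality of $d_A$, and that falls out immediately once $v$ is chosen in $\mathcal{S}ol_\mathbb{Q}(t_0)$.
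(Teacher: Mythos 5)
Your proof is correct and follows essentially the same route as the paper: choose $v$ in $\mathcal{S}ol_\mathbb{Q}(t_0)\cap\ker(N)$, use monodromy-invariance of the rational structure to see all basis vectors are rational, and conclude. You are in fact slightly more careful than the paper on two points it leaves implicit — that the rational kernel of $N$ is nonzero, and that $d_A\in\mathbb{Q}$ must be established \emph{before} one can assert $N^2(B(-v))/d_A\in\mathcal{S}ol_\mathbb{Q}(t_0)$ (the paper only records $d_A\in\mathbb{Q}$ as a consequence afterwards) — which is a welcome tightening rather than a different argument.
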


\noindent In particular we conclude that $d_A\in\mathbb{Q}$; in fact, using the existence of monodromy-invariant fiberwise sublattice $R^3\varphi_*\mathbb{Z}$, one can show that $d_A\in\mathbb{Z}$.

\section{Local monodromy operators}\label{s:trans}

The construction of the generalized Doran-Morgan basis can be described purely in terms of linear algebra and is rather straightforward. The only problem is finding a maximally unipotent element $M$ in the monodromy group $Mon(\mathcal{P})$. An element is maximally unipotent if and only if it has one Jordan block with eigenvalue $1$. However, given a differential equation and a loop omitting its singular point, it is not obvious how to determine the Jordan form of the associated monodromy transformation. Nevertheless, there are elements of the monodromy group $Mon(\mathcal{P})$ for which there exists an easy algorithm which determines their Jordan form. These are the elements defining so called \textit{local monodromies}; they are the images of the standard generators of the fundamental group of $\mathbb{P}^1\setminus{S}$, i.e. the loops encircling the singular points.

Let $s\in\mathcal{S}$ be a singular point of a differential operator $\mathcal{P}$ and let $\gamma\in\pi_1(\mathbb{P}^1\setminus\mathcal{S},t_0)$ be any loop such that the winding number of $\gamma$ around $s$ is 1 and the winding number of $\gamma$ around any other element of $\mathcal{S}$ is 0. In this situation we call $M_\gamma$ a \textit{local monodromy operator} of $s$. Of course $M_\gamma$ depends not only on $s$ but also on the choice of $\gamma$. However, any two such choices are conjugate in $\pi_1(\mathbb{P}^1\setminus\mathcal{S},t_0)$ and hence the Jordan form of a local monodromy is a well-defined invariant of the singularity $s$. When it does not lead to a confusion, we denote by $M_s$ a local monodromy around $s$.

The eigenvalues of a local monodromy around any singularity $s$ can be easily read off the differential equation itself. To do this use an automorphism of $\mathbb{P}^1$ to move the singularity $s$ to $0$ and then write the operator $\mathcal{P}$ using the logarithmic derivative $\Theta:=t\tfrac{d}{dt}$:
$$
\mathcal{P}=\Theta^4+q_1(t)\Theta^3+q_2(t)\Theta^2+q_3(t)\Theta+q_4(t),\quad q_i\in\mathbb{C}(t)
$$
For a Fuchsian operator the coefficients $q_i$ are holomorphic at $0$ and we can read the Jordan form of the local monodromy operator $M_0$ from the \textit{indicial equation}:
$$
X^4+q_1(0)X^3+q_2(0)X^2+q_3(0)X+q_4(0)=0
$$

More precisely, let $\alpha_1,\cdots,\alpha_4$ be the roots of the indicial equation. Then the eigenvalues of the local monodromy at $s$ are $\exp(2\pi i\alpha_1),\cdots,\exp(2\pi i\alpha_4)$. For Picard-Fuchs operators, if $\alpha$ is a root of the indicial equation of multiplicity $m$, the Jordan block associated to $\exp(2\pi i\alpha)$ has size $m$ and distinct roots correspond to different Jordan blocks (cf. \cite{Hofmann}, Theorem 4.3.6). Thus we see that identifying the Jordan form of a local monodromy operator is a direct computation.

\begin{definition}
	A singularity $s\in\mathcal{S}$ of a Fuchsian differential operator of order 4 is called a \textit{point of maximal unipotent monodromy} or a \textit{MUM point} if the local monodromy at $s$ has Jordan form:
	$$\begin{pmatrix}
	1&1&0&0\\0&1&1&0\\0&0&1&1\\0&0&0&1\\
	\end{pmatrix}$$
\end{definition}

Assume that $\mathcal{P}$ has a MUM point at 0. By the very definition we may take any local monodromy $M_0:=M_\gamma$ around $0$ as $M$ in the construction of the generalized Doran-Morgan basis $\mathcal{B}_{M,A}$. Usually the most natural choice is to pick the base point $t_0$ such that $|t_0|\ll 1$ and a loop $\gamma$ such that $\operatorname{im}(\gamma)$ is contained in a small disc centered of 0 which contains no other singularities. This procedure allows us to construct generalized Doran-Morgan basis in a consistent way for a wide range of operators.

In the rest of this paper we shall consider only operators which have a point of maximal unipotent monodromy. Apart from providing natural rational basis of solutions, the existence of a MUM point is also crucial from the point of view of the mirror symmetry and recovering arithmetical data. For example in \cite{van Straten} the so-called \textit{Calabi-Yau operators}, which provide an abstract version of Picard-Fuchs operators, are required to have a MUM point. Nevertheless, there are numerous examples of Picard-Fuchs operators which do not have a MUM point (see \cite{Cynk-van Straten_2}). In general, it is not clear how to approach the construction of rational basis in those cases.

Another class of singularities which is crucial for our further investigations are singularities of type $\tfrac{1}{n}C$. As a local monodromy around a MUM point provides a natural candidate for the operator $M$ in the construction of $\mathcal{B}_{M,A}$, local monodromy around $\tfrac{1}{n}C$ will serve as $A$. This kind of singularities is particularly interesting due to their conjectural relations with mirror symmetry and modularity. The reason for our focus on them in this paper is the former.

\begin{definition}
	A singularity $s\in\mathcal{S}$ of a Fuchsian differential operator of order 4 is called a \textit{singularity of type} $\tfrac{1}{n}C$ if the local monodromy at $s$ has Jordan form:
	\begin{equation*}
	\begin{pmatrix}
	1&0&0&0\\0&\zeta_n&1&0\\0&0&\zeta_n&0\\0&0&0&\zeta_{n}^2\\
	\end{pmatrix}
	\end{equation*}
	where $\zeta_n$ is some primitive  $n$th root of unity. When $n=1$, resp. $n=2$, we use the name \textit{conifold}, resp. \textit{half-conifold, singularities}.
\end{definition}

Doran-Morgan bases were introduced in \cite{Doran-Morgan}, in the case of \textit{hypergeometric operators}. These operators have three singular points at $0$, $1$ and $\infty$; furthermore, $0$ is a MUM point and $1$ is a conifold point. In this situation local monodromies $M_0$ and $M_1$ generate the entire monodromy group. If we consider the Doran-Morgan basis $\mathcal{B}:=\mathcal{B}_{M_0,M_1}$, one easily checks that
\begin{equation*}
M_1^{\mathcal{B}}=
\begin{pmatrix}
1&-K&-1&-1\\0&1&0&0\\0&0&1&0\\0&0&0&1\\
\end{pmatrix}
\end{equation*}

For general $\tfrac{1}{n}C$ singularities we consider a variant of this construction:
\begin{definition}
Let $\mathcal{P}$ be a differential operator of order 4 with a MUM point at 0 and a $\tfrac{1}{n}C$ singularity at $s$. Let $\gamma$, resp. $\delta\in\pi_1(\mathbb{P}^1\setminus\mathcal{S},b)$, be a loop giving local monodromy around 0, resp. $s$. Put $M:=M_\gamma$, $A:=M_\delta^n$ and assume that $d_A\neq 0$ with respect to $M$. The \textit{Doran-Morgan basis} associated to a couple $(\mathcal{P},s)$ is $\mathcal{B}_{M,A}$.
\end{definition}

Let $s$ be $\tfrac{1}{n}C$ singularity of an operator $\mathcal{P}$. By inspecting the Jordan form of $M_{\delta^n}=M_\delta^n$, we see that it is the same as that of a local monodromy around an ordinary conifold point. 
Thus for the Doran-Morgan basis $\mathcal{B}$ associated with a pair $(\mathcal{P},s)$ we have
\begin{equation*}
\left(M_\delta^{\mathcal{B}}\right)^n=
\begin{pmatrix}
1&-K&-1&-1\\0&1&0&0\\0&0&1&0\\0&0&0&1\\
\end{pmatrix}
\end{equation*}
Note, however, that the form of $M_\delta^{\mathcal{B}}$ is not determined uniquely and can differ for different singularities of type $\tfrac{1}{n}C$.

\section{Frobenius bases near singular points}\label{s:frob}

In a neighbourhood of every point $t\in\mathbb{P}^1$ (which may be a singular point of $\mathcal{P}$) there exists a certain basis of (possibly multivalued) solutions of $\mathcal{P}=0$, called the \textit{Frobenius basis} at $t$. These are the bases constructed by the classical Frobenius method. For detailed description of the Frobenius method and the associated basis of solutions we refer the reader to \cite{Ince}; here we will only be interested in Frobenius bases at MUM and $\tfrac{1}{n}C$ points.

The \textit{normalized Frobenius basis} $F_{m}$ at a MUM point at 0 is of the form
\begin{equation}
	y_1=f_1,\quad y_2=\tfrac{1}{2\pi i}\cdot\left(f_2+\log(t)f_1\right),\quad
	y_3=\tfrac{1}{(2\pi i)^2}\cdot\left(f_3+\log(t)f_2+\tfrac{1}{2}(\log(t))^2f_1\right),
\end{equation}
\begin{equation*}
	y_4=\tfrac{1}{(2\pi i)^3}\cdot\left(f_4+\log(t)f_3+\tfrac{1}{2}(\log(t))^2f_2+\tfrac{1}{6}(\log(t))^3f_1\right),
\end{equation*}	
where the functions $f_i$ are holomorphic in a neighbourhood of $0$, $f_1(0)=1$ and $f_i(0)=0$ for $i=2,3,4$. Consequently, in this basis the local monodromy around $0$ is given by the matrix
\begin{equation*}
M_0^{F_m}=
\begin{pmatrix}
1&1&\tfrac{1}{2}&\tfrac{1}{6}\\
0&1&1&\tfrac{1}{2}\\
0&0&1&1\\
0&0&0&1
\end{pmatrix}.
\end{equation*}

Let $\mathcal{B}=\mathcal{B}^{M,A}$ be some generalized Doran-Morgan basis, where $M$ is a local monodromy around $0$. Since by Remark \ref{remark:MUM} we know the monodromy matrix $M^{\mathcal{B}}_0$, solving a system of linear equations
\begin{equation*}
T^{\mathcal{B}}_{F_m} M^{\mathcal{B}}_0-M^{F_m} T^{\mathcal{B}}_{F_m}=0
\end{equation*}
in coefficients of the matrix $T^{\mathcal{B}}_{F_m}$, we can deduce that the transition matrix from the generalized Doran-Morgan basis to the normalized Frobenius basis at $0$ is of the form
\begin{equation*}
T^{\mathcal{B}}_{F_m} =\begin{pmatrix}
a_1&0&0&0\\a_2&a_1&0&0\\a_3&\frac{a_1}{2}+a_2&\frac{a_1}{d_\gamma}&0\\a_4&\frac{a_1}{6}+\frac{a_2}{2}+a_3&\frac{a_1+a_2}{d_\gamma}&\frac{a_1}{d_\gamma}
\end{pmatrix},\quad a_i\in\mathbb{C}.
\end{equation*}
It is a remarkable conjecture, motivated by the phenomenon of mirror symmetry, that $a_i\in\mathbb{Q}[\tfrac{\zeta(3)}{(2\pi i)^3}]$. In fact, one expects even stronger result to hold:
\begin{conjecture}\label{conjecture}
	Let $\mathcal{P}$ be a Picard-Fuchs operator of a family of Calabi-Yau threefolds having a MUM point at $0$ and such that the associated local system is irreducible.
	Then there exists $\gamma\in\pi_1(\mathbb{P}^1\setminus\mathcal{S},t_0)$ such that $d_{\gamma}\neq0$ and there is an asymptotic expansion
	\begin{equation*}
	N_\gamma(-v)=\frac{d_\gamma}{6(2\pi i)^{3}}\log^{3}(t)+\frac{p_\gamma}{48\pi i}\log(t)+\frac{a_\gamma}{(2\pi i)^3}\zeta(3)+o(1)
	\end{equation*}
	where $N_\gamma:=M_\gamma-\operatorname{Id}$ and $d_\gamma,a_\gamma,p_\gamma\in\mathbb{Z}$.
\end{conjecture}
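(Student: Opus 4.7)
The plan is to translate the conjectured asymptotic into an identification of the coefficients of $N_\gamma(-v)$ in the normalized Frobenius basis $F_m=\{y_1,y_2,y_3,y_4\}$ at the MUM point $0$. Writing
\[N_\gamma(-v) = c_1 y_1 + c_2 y_2 + c_3 y_3 + c_4 y_4\]
and substituting the explicit formulas for $y_i$ from Section~\ref{s:frob} (using $f_1(0)=1$ and $f_i(0)=0$ for $i\ge 2$), the conjecture becomes equivalent to the four conditions
\[c_4 = d_\gamma,\quad c_3 = 0,\quad c_2 = \tfrac{p_\gamma}{24},\quad c_1 = \tfrac{a_\gamma\zeta(3)}{(2\pi i)^3},\]
with $d_\gamma, p_\gamma, a_\gamma\in\mathbb{Z}$. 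Everything reduces to verifying these four statements for a suitable loop $\gamma$.

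The first condition is essentially immediate: $c_4$ is the entry $a_4$ in the first column of $T^{\mathcal{B}}_{F_m}$ computed in Section~\ref{s:frob}, and rationality of the generalized Doran-Morgan basis together with the integral sublattice $R^3\varphi_*\mathbb{Z}$ yields $c_4=d_\gamma\in\mathbb{Z}$. For $c_3=0$ I would exploit the polarization: $R^3\varphi_*\mathbb{Q}$ carries a monodromy-invariant non-degenerate skew-symmetric intersection form, and the weight filtration associated to the nilpotent $N_0=M_0-\mathrm{Id}$ places $v$ and $N_\gamma(-v)$ in specific graded pieces whose orthogonality, once translated to the Frobenius basis via the known shape of the pairing, forces the $y_3$-coefficient to vanish.

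The deep content is the identification of $c_1$ and $c_2$. Here I would appeal to the Gamma-class integral structure of Iritani and Hosono: under mirror symmetry, $N_\gamma(-v)$ should equal an Iritani-type central charge of the form
\[Z(E)(t) = \int_{X^\vee} e^{-H\log(t)/(2\pi i)}\,\hat\Gamma(X^\vee)\,\mathrm{ch}(E),\qquad E\in D^b(X^\vee),\]
and the formal expansion $\hat\Gamma(X^\vee) = 1 - \tfrac{(2\pi i)^2}{24}c_2(X^\vee) + \tfrac{\zeta(3)}{(2\pi i)^3}\chi(X^\vee)\,[\mathrm{pt}] + \cdots$ produces exactly the rational $1/24$-factor from $c_2(X^\vee)$ and the $\zeta(3)$-coefficient from the Euler number, with $p_\gamma$ and $a_\gamma$ corresponding to $c_2(X^\vee)\cdot H$ and $\chi(X^\vee)$ respectively.

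The main obstacle is this last step: for a general Picard-Fuchs operator there is no a priori mirror $X^\vee$, and even where one is available the required Gamma conjecture is not proved in the generality needed here. For the concrete families considered in this paper one can bypass the theoretical obstruction by computing $c_1$ and $c_2$ to high precision and running an integer-relation algorithm (PSLQ or LLL) against the basis $\{1,\zeta(3)/(2\pi i)^3\}$, thereby producing candidate integers $d_\gamma, p_\gamma, a_\gamma$ case by case; a uniform rigorous proof, however, would require either an independent geometric construction of an integral element realising $N_\gamma(-v)$ on the $X$-side or a verified mirror/Gamma theorem on the $X^\vee$-side.
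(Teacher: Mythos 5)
This statement is labelled as a \emph{conjecture} in the paper, and the paper offers no proof of it: the author explicitly says it has been ``proven only for a special class of hypergeometric operators'' and otherwise only verified numerically. So there is no internal proof to compare yours against, and your proposal cannot be accepted as a proof either --- as you yourself concede in your final paragraph. Your preliminary reduction is correct and worth keeping: expanding the normalized Frobenius basis at the MUM point gives $y_1=1+o(1)$, $y_2=\tfrac{1}{2\pi i}\log t+o(1)$, $y_3=\tfrac{1}{2(2\pi i)^2}\log^2 t+o(1)$, $y_4=\tfrac{1}{6(2\pi i)^3}\log^3 t+o(1)$, so the conjecture is exactly the four conditions $c_4=d_\gamma\in\mathbb{Z}$, $c_3=0$, $c_2=p_\gamma/24$, $c_1=a_\gamma\zeta(3)/(2\pi i)^3$ (with $v$ normalized to the holomorphic solution $f_1$, which is what makes $c_4=d_\gamma$ and its integrality follow from the rational/integral structure of Section~2). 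That much is sound.

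The genuine gaps are the remaining three conditions, and they are not closable by the arguments you sketch. For $c_3=0$, the polarization argument is only a plausibility claim: you would need to exhibit the intersection form explicitly in the Frobenius basis and check that the relevant orthogonality actually kills the $y_3$-coefficient of $N_\gamma(-v)$ for \emph{every} $\gamma$ with $d_\gamma\neq 0$, not just for a local conifold monodromy; the paper's Conjecture~1 deliberately allows arbitrary loops $\gamma$ (including operators with no conifold point at all, cf.\ the remark following the conjecture), where no geometric vanishing-cycle picture is available. For $c_1$ and $c_2$, the Gamma-class/Iritani mechanism is indeed the expected source of the $1/24$ and $\zeta(3)$ coefficients, but it presupposes a mirror manifold $X^\vee$ and an integral structure theorem that exist, at the level of proof, essentially only in the hypergeometric (complete-intersection-in-toric) cases --- which is precisely the class for which the conjecture is already known. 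Falling back on PSLQ/LLL identification reproduces the paper's own numerical evidence but does not upgrade the statement to a theorem. In short: your write-up is an accurate account of \emph{why the conjecture is believed}, not a proof, and you should present it as such.
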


This conjecture, although proven only for a special class of hypergeometric operators, has been verified numerically in a large number of cases (see \cite{Hofmann}, \cite{database}, \cite{Chmiel2}). In the case when $\gamma$ is a loop defining local monodromy around a conifold singularity, the integers $d_\gamma$, $p_\gamma$ and $a_\gamma$ are expected to be numerical invariants $H^3$, $c_2.H$ and $c_3$ of the mirror Calabi-Yau manifold (\cite{Candelas-de la Ossa-Green-Prakes}, \cite{Chen-Yang-Yui}). For more general choices of $\gamma$ we lack such a geometric interpretation. Nevertheless, the Conjecture \ref{conjecture} seems to hold even for operators without any conifold singularities (see \cite{Chmiel2}).

The significance of the transition matrix $T^{\mathcal{B}}_{F_m}$, especially in the presence of a conifold singularity, leads us to investigate whether the transition matrix between the Doran-Morgan basis and a local basis at a singularity of type $\tfrac{1}{n}C$ can also be identified.

The \textit{normalized Frobenius basis} $F_c$ at a singularity $s$ of type $\tfrac{1}{n}C$ is
\begin{eqnarray}
	&&y_1=f_1,\quad y_2=f_2,\quad y_3=\tfrac{1}{2\pi i \zeta_n}\left(f_3+f_2\log(t-s)\right),\quad y_4=f_4,
\end{eqnarray}
where the asymptotic behaviour of the functions $f_i$ in a neighbourhood of $s$ is  $f_1(s)=1$, $f_2(t)=(t-s)^{\tfrac{1}{n}}+O((t-s)^{1+\tfrac{1}{n}})$, $f_3(t)=O((t-s)^{1+\tfrac{1}{n}})$ and $f_4(t)=(t-s)^{\tfrac{2}{n}}+O((t-s)^{1+\tfrac{2}{n}})$.

Using this description one easily obtains the following useful fact:

\begin{lemma}\label{lemma_FrCform}
Let $s$ be a singularity of type $\tfrac{1}{n}C$. In the normalized Frobenius basis $F_c$ the local monodromy around $s$ is given in its Jordan form. In particular for $n=1$, resp. $n=2$, it reads
$$M_{s}^{F_c}=\begin{pmatrix}
1&0&0&0\\0&1&1&0\\0&0&1&0\\0&0&0&1\\
\end{pmatrix},
\quad\text{resp.}\quad
M_{s}^{F_c}=\begin{pmatrix}
1&0&0&0\\0&-1&1&0\\0&0&-1&0\\0&0&0&1\\
\end{pmatrix}.$$
\end{lemma}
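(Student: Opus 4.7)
The plan is to verify the monodromy action on each basis element of $F_c$ directly by analytic continuation around $s$, using only the explicit asymptotic forms of the $f_i$ given just before the lemma together with the Frobenius structure of the solution space.

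First I would shift coordinates so that $s=0$, so that encircling $s$ once corresponds to $t\mapsto e^{2\pi i}t$ and $\log(t)\mapsto\log(t)+2\pi i$. Since a $\tfrac{1}{n}C$ singularity has indicial roots $0,\tfrac{1}{n},\tfrac{1}{n},\tfrac{2}{n}$, the Frobenius construction tells us the shape of the four local solutions: $f_1$ is honestly holomorphic; $f_4=t^{2/n}\cdot h(t)$ with $h$ holomorphic; and the two solutions coming from the repeated exponent $\tfrac{1}{n}$ are $f_2=t^{1/n}g_2(t)$ and a logarithmic solution of the shape $t^{1/n}g_3(t)+\log(t)\cdot f_2$. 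Matching the latter with the normalized $y_3=\tfrac{1}{2\pi i\zeta_n}\bigl(f_3+f_2\log(t)\bigr)$ identifies $f_3=t^{1/n}g_3(t)$ with $g_3$ holomorphic. This means under $t\mapsto e^{2\pi i}t$ we have $f_1\mapsto f_1$, $f_2\mapsto\zeta_n f_2$, $f_3\mapsto\zeta_n f_3$, $f_4\mapsto\zeta_n^2 f_4$.

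Next I would plug in. For $y_1,y_2,y_4$ the action is immediate: $M_s y_1=y_1$, $M_s y_2=\zeta_n y_2$, $M_s y_4=\zeta_n^2 y_4$. For $y_3$ the computation is
\begin{equation*}
M_s y_3 \;=\; \tfrac{1}{2\pi i\,\zeta_n}\Bigl(\zeta_n f_3+\zeta_n f_2\bigl(\log(t)+2\pi i\bigr)\Bigr)
\;=\; \zeta_n\, y_3 \;+\; y_2.
\end{equation*}
Assembling the four column vectors produces the Jordan matrix
\begin{equation*}
M_s^{F_c}=\begin{pmatrix}
1&0&0&0\\
0&\zeta_n&1&0\\
0&0&\zeta_n&0\\
0&0&0&\zeta_n^2
\end{pmatrix},
\end{equation*}
which is exactly the normal form in the definition of a $\tfrac{1}{n}C$ singularity. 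Specializing $\zeta_1=1$ and $\zeta_2=-1$ yields the two displayed matrices.

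The only delicate step is the identification of the transformation law $f_3\mapsto\zeta_n f_3$: the lemma's hypotheses only state the vanishing order $f_3(t)=O(t^{1+1/n})$, not the full analytic structure. The justification comes from the Frobenius method applied to the double indicial root $\tfrac{1}{n}$, which forces the non-logarithmic piece of the second solution to lie in $t^{1/n}\cdot\mathbb{C}\{t\}$; this is the main point that must be cited (or recalled from \cite{Ince}) to keep the rest of the argument a pure bookkeeping exercise.
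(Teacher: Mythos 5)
Your computation is correct and is exactly the argument the paper has in mind: the paper offers no written proof, stating only that the lemma follows "easily" from the description of the normalized Frobenius basis, and the intended verification is precisely your term-by-term analytic continuation $f_1\mapsto f_1$, $f_2,f_3\mapsto \zeta_n f_2,\zeta_n f_3$, $f_4\mapsto\zeta_n^2 f_4$, $\log(t-s)\mapsto\log(t-s)+2\pi i$, with the normalization $\tfrac{1}{2\pi i\zeta_n}$ producing the off-diagonal entry $1$. You are also right to flag that the membership $f_3\in(t-s)^{1/n}\mathbb{C}\{t-s\}$ (and likewise for $f_1,f_2,f_4$) is implicit in the Frobenius construction rather than in the stated $O$-estimates, but this is part of what the paper builds into its definition of the normalized Frobenius basis, so no gap remains.
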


\begin{definition}
	The solution $f_2$ in the normalized Frobenius basis $F_c$ is called the \textit{conifold period}. It is (up to a scalar) uniquely determined as the generator of $\operatorname{im}(M_s^n-\text{Id})$.
\end{definition}

\section{Modularity theorem}\label{s:mod}

Every smooth projective variety $X$ defined over $\mathbb{Q}$ has an associated $L$-function $L(X,s)$. It is defined in terms of the characteristic polynomials of the Frobenius automorphisms acting on the middle cohomology. The special values $L(X,1)$ and $L(X,2)$ of this function appear naturally in the context of period integrals (section \ref{s:values}) and seem to have strong connections with the coefficients of the transition matrix $T^{F_c}_\mathcal{B}$ (section \ref{s:coeff}).

However, the $L$-series defining $L(X,s)$ converges only for $\operatorname{Re}(s)>\tfrac{5}{2}$ and the only known way of continuing it analytically to the special points $1$ and $2$ is by identifying it with an $L$-series of a \textit{modular form}. For this reason, before we proceed, we take a quick detour and briefly recall basic facts concerning modular forms and their relationship with rigid Calabi-Yau threefolds.

For $N\in\mathbb{N}$ we define the \textit{N-th Hecke subgroup} of $SL(2,\mathbb{Z})$ as $$\Gamma_0(N):=\left\{\begin{pmatrix}
a&b\\c&d
\end{pmatrix}\in SL(2,\mathbb{Z}):c\equiv0 \mod N \right\}$$
\textit{Unrestricted modular form of weight $k\in\mathbb{Z}$ and level $N\in\mathbb{N}$} is a holomorphic function $f$ on the upper half-plane $\mathbb{H}=\left\{z\in\mathbb{C}:\text{Im}(z)>0\right\}$ such that
\begin{equation*}
f\Big( \frac{a\tau+b}{c\tau+d} \Big)=(c\tau+d)^kf(\tau)\quad \textrm{for all}\quad \begin{pmatrix}
a&b\\c&d
\end{pmatrix}\in\Gamma_0(N) 
\end{equation*}
Putting $a=b=d=1$ and $c=0$, this condition gives $f(\tau+1)=f(\tau)$, implying that $f$ has Fourier expansion of the form \begin{equation}\label{eq:F}
f(\tau)=\sum_{n=-\infty}^{\infty}c_nq^n,\quad\textrm{where}\quad q=e^{2\pi i \tau}
\end{equation}
A \textit{modular form} is an unrestricted modular form such that in its Fourier expansion $c_n=0$ for all $n<0$; if additionally $c_0=0$, we call it a \textit{cusp form}.

Each cusp form with Fourier expansion \eqref{eq:F} has an associated Dirichlet series $$L(f,s)=\sum_{n=1}^{\infty}\frac{c_n}{n^s}$$ called the \textit{L-series} of $f$. The $L$-series of a cusp form of weight $k$ converges absolutely for $\text{Re}(s)>\tfrac{k}{2}+1$ and therefore defines a holomorphic function in the half-plane $\left\{z\in\mathbb{C}:\text{Re}(z)>\tfrac{k}{2}+1\right\}$.

It is a classical fact that the $L$-function of a modular form can be extended to an analytic function defined in the entire $\mathbb{C}$. This allows us to define the \textit{special values} of the $L$-function. For a cusp form $f$ of weight $k$ these are $L(f,j)$ for $j=1,\cdots,k-1$. For cusp forms of weight $4$, which are the only ones we shall need, the functional equation for the completed $L$-function implies that $L(f,3)=\tfrac{2\pi^2}{N}L(f,1)$. Thus when referring to the special values we will only mean $L(f,1)$ and $L(f,2)$.

For rigid Calabi-Yau threefolds defined over $\mathbb{Q}$ the two considered kinds of $L$-functions are related by the following modularity theorem (\cite{Goueva-Yui}, Theorem 3):

\begin{theorem}\label{th:mod}
	Let $X$ be a rigid Calabi-Yau threefold definied over $\mathbb{Q}$. Then there exists an integer $N$ and a Hecke eigenform form $f\in\mathcal{S}_k(\Gamma_0(N))$ such that we have the equality of $L$-functions $L(X,s)=L(f,s)$.
\end{theorem}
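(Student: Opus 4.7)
The plan is to construct a two-dimensional $\ell$-adic Galois representation attached to $X$ from étale cohomology, show it is modular, and match $L$-functions. Rigidity of $X$ gives $h^{1,2}(X)=0$, so by the Hodge diamond recalled in Section~\ref{s:PF_and_mon} the Betti cohomology $H^3(X,\mathbb{C}) = H^{3,0}\oplus H^{0,3}$ has dimension two. Fixing a prime $\ell$, the comparison between Betti and étale cohomology then produces a continuous representation
\begin{equation*}
\rho_{X,\ell}\colon \operatorname{Gal}(\overline{\mathbb{Q}}/\mathbb{Q})\longrightarrow GL\bigl(H^3_{\mathrm{\acute{e}t}}(X_{\overline{\mathbb{Q}}},\mathbb{Q}_\ell)\bigr)\simeq GL_2(\mathbb{Q}_\ell),
\end{equation*}
and $L(X,s)$ is by definition assembled from the characteristic polynomials of Frobenius on the inertia-invariants of this representation. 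Showing $L(X,s)=L(f,s)$ is therefore equivalent to exhibiting a weight-$4$ Hecke eigenform $f$ whose attached Galois representation matches $\rho_{X,\ell}$.

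First I would verify the standard Galois-theoretic properties of $\rho_{X,\ell}$. Choosing a smooth proper model over $\mathbb{Z}[1/M]$, smooth base-change and Deligne's purity theorem show that $\rho_{X,\ell}$ is unramified outside $S:=\{\ell\}\cup\{p\mid M\}$ and pure of weight~$3$ at primes of good reduction. Complex conjugation interchanges $H^{3,0}$ and $H^{0,3}$, hence $\det\rho_{X,\ell}(c)=-1$ (oddness); by $p$-adic Hodge theory applied at $\ell$, the representation is de~Rham with Hodge--Tate weights $\{0,3\}$, i.e.\ regular and geometric in the Fontaine--Mazur sense.

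Next I would invoke the modularity machinery. The residual representation $\overline{\rho}_{X,\ell}$ is odd and two-dimensional; when it is absolutely irreducible, the Khare--Wintenberger theorem (Serre's conjecture) supplies a cusp form whose mod-$\ell$ representation matches $\overline{\rho}_{X,\ell}$, and a modularity lifting theorem of Kisin--Taylor--Wiles type, adapted to the Hodge--Tate type $\{0,3\}$, then promotes this to an isomorphism $\rho_{X,\ell}\cong\rho_{f,\ell}$ for some eigenform $f\in\mathcal{S}_4(\Gamma_0(N))$. Comparison of Euler factors at primes $p\notin S$ yields $L(X,s)=L(f,s)$, and the level $N$ is controlled by the conductors of $\rho_{X,\ell}$ at the primes of bad reduction of $X$.

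The main obstacle in this plan is the residually reducible or otherwise degenerate case: for a specific prime $\ell$ the residual representation $\overline{\rho}_{X,\ell}$ need not satisfy the hypotheses of the available lifting theorems. The standard remedy — and, as I understand it, the delicate ingredient of the Dieulefait-style arguments underlying the version of Theorem~\ref{th:mod} recorded in \cite{Goueva-Yui} — is to work with the compatible system $\{\rho_{X,\ell}\}_\ell$: pick an auxiliary prime $\ell'$ at which the residual representation \emph{is} well-behaved, establish modularity there, and then transport it back via the equality of Frobenius traces across different $\ell$. Carrying this out uniformly for every rigid Calabi--Yau threefold over $\mathbb{Q}$ is what makes the theorem a genuinely nontrivial synthesis rather than a direct application of any single modularity statement.
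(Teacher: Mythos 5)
The paper does not prove this statement: it is quoted directly from \cite{Goueva-Yui} (Theorem 3) as an external input, so there is no internal proof to compare your attempt against. Your sketch correctly reproduces the strategy of that cited result — the two-dimensional odd Galois representation on $H^3_{\mathrm{\acute{e}t}}$ of a rigid Calabi--Yau threefold, residual modularity via Serre's conjecture (Khare--Wintenberger), and the compatible-system/auxiliary-prime argument to handle residually degenerate $\ell$ — so it is consistent with the source the paper relies on, though, as you yourself note, it is an outline of a deep external theorem rather than a self-contained proof.
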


\section{Monodromy action on the conifold period}\label{s:values}

In the next sections we present explicit results concerning the form of the transition matrix $T^{\mathcal{B}}_{F_c}$ between the Doran-Morgan basis $\mathcal{B}$ and the Frobenius basis $F_c$ at a singularity of type $\tfrac{1}{n}C$. Here we begin with presenting our main motivation for considering it. It comes from results presented in \cite{Chmiel}, which could be vastly generalized if the form of the transition matrix $T^{\mathcal{B}}_{F_c}$ was known.

\medskip

\textbf{In the rest of this paper we work in the following setup}: $\mathcal{P}$ is a Picard-Fuchs operator of order 4 with a MUM point at 0 and a singularity of type $\tfrac{1}{n}C$ at $s$. $M_0$ and $M_s$ are some fixed local monodromies around the respective singularities, $N_0:=M_0-\operatorname{Id}$ and $N_s:=M_s-\operatorname{Id}$. We assume that $d_s:=d_{M_s}\neq 0$. Thus we may construct Doran-Morgan basis $\mathcal{B}^{M_0,M_s^n}$ as at the end of section \ref{s:trans}; we will denote this basis simply by $\mathcal{B}$. We also denote by $F_m$, resp. $F_c$, the normalized Frobenius basis at $0$, resp. $s$.

\medskip

The first step towards identifying the transition matrix $T^{\mathcal{B}}_{F_c}$ was taken in \cite{Chmiel}. There we considered Picard-Fuchs operators $\mathcal{P}$ of families of \textit{double octics}. Double octic is a Calabi-Yau threefold obtained as a resolution of singularities of a double cover of $\mathbb{P}^3$ branched along a union of eight planes. Double octics defined over $\mathbb{Q}$ with small Hodge number $h^{1,2}(X)\leq1$ were completely classified in \cite{Cynk-Kocel}. As already mentioned, the case $h^{1,2}(X)=0$ corresponds to rigid manifolds, while manifolds with $h^{1,2}(X)=1$ deform in one-dimensional families.

Assume that the fiber $X_s$ over a $\tfrac{1}{n}C$ singularity $s$ is birational to a rigid Calabi-Yau manifold $\hat{X_s}$ and let $f$ be the modular form associated to this rigid model $\hat{X_s}$ by the modularity theorem. Let us define $\Lambda_f:=L(f,1)\mathbb{Z}\oplus\tfrac{L(f,2)}{2\pi i}\mathbb{Z}$. Clearly, if the special values are non-zero, this is a lattice in $\mathbb{C}$. It was observed in \cite{Cynk-van Straten} that $\Lambda_f$ is commensurable to the lattice of period integrals
\begin{eqnarray*}
	&&\left\{\int_{\gamma}\omega:\;\gamma\in H_{3}(\hat{X_s},\mathbb{Z})\right\}
\end{eqnarray*}
where $\omega\in H^{3,0}(\hat{X_s})$. This also suggests the connection between the special values $L(f,1)$, $\tfrac{L(f,2)}{2\pi i}$ of the $L$-function and values of period functions of the family $X_t$ evaluated at $s$.

\begin{definition}
Let $f_c$ denote the conifold period near $s$, i.e. a generator of $\operatorname{im}(N_s)$. Then we define
$$\mathcal{L}^0_{\mathcal{P},s}:=\Big\langle\big\{M_0^n(f_c)(s): n\in\mathbb{Z}\big\}\Big\rangle$$
\end{definition}

In \cite{Chmiel} we observed that $\mathcal{L}^0_{\mathcal{P},s}$ contains (some integer multiple of) $\Lambda_f$. In many cases $\mathcal{L}^0_{\mathcal{P},s}\otimes\mathbb{Q}=\Lambda_f\otimes\mathbb{Q}$ or, more precisely, both $\mathcal{L}^0_{\mathcal{P},s}$ and $\Lambda_f$ are lattices and they are commensurable. There are, however, singularities $s$ for which $\operatorname{rank}(\mathcal{L}^0_{\mathcal{P},s})=3$. The additional generators of $\mathcal{L}^0_{\mathcal{P},s}$ do not belong to $\Lambda_f\otimes\mathbb{Q}$ but turn out to be related to certain \textit{additional} integrals on a singular double cover of $\mathbb{P}^3$ which defines $\hat{X_s}$ (as opposed to integrals on $\hat{X_s}$ itself).

We see that both cases are interesting in their own right. If  $\operatorname{rank}(\mathcal{L}^0_{\mathcal{P},s})=2$, we can compute the special values $L(f,1)$ and $L(f,2)$, up to a rational multiplicative constant, using the Picard-Fuchs operator of the considered family of Calabi-Yau threefolds. If $\operatorname{rank}(\mathcal{L}^0_{\mathcal{P},s})=3$, the special values cannot be directly identified but instead we obtain information on the periods of the singular fiber $X_s$.

As such the following question arises:

\begin{question}\label{q1}
When is $\mathcal{L}^0_{\mathcal{P},s}$ a lattice?
\end{question}

The answer to this natural question is connected to our main goal of identification of the transition matrix $T^\mathcal{B}_{F_s}$ as follows. Since both the conifold period $f_c$ and $N_s(-v)$ from the Doran-Morgan basis $\mathcal{B}$ generate $\operatorname{im}(N_s)$, $f_c=\alpha N_s(-v)$ for some $\alpha\in\mathbb{C}^*$. In the normalized Frobenius basis $F_c$ the function $e_1$ is the only solution such that $e_1(s)\neq 0$. Thus we obtain
$$\mathcal{L}_{\mathcal{P},s}^0=\mathbb{Z}\text{-span}\Big(\big\{(e_1)^T\cdot (M_0^{F_c})^n \cdot e_2: n\in\mathbb{Z}\big\}\Big)=\alpha\cdot\mathbb{Z}\text{-span}\Big(\big\{(e_1)^T\cdot T^\mathcal{B}_{F_c}\cdot (M^{\mathcal{B}}_0)^n \cdot e_1: n\in\mathbb{Z}\big\}\Big).$$
Since the form of $M_0^\mathcal{B}$ is known, the rank of $\mathcal{L}^0_{\mathcal{P},s}$ can be determined once the form of the transition matrix $T^\mathcal{B}_{F_c}$ is established. For the case of half-conifold singularities, this is done in Section \ref{s:theorem}.

On the other hand, this connection between $\mathcal{L}_{\mathcal{P},s}^0$ and the transition matrix $T^\mathcal{B}_{F_c}$ can also be used in a following way. From \cite{Chmiel} we know that the special values $L(f,1)$ and $\tfrac{L(f,2)}{2\pi i}$ are elements of $\mathcal{L}_{\mathcal{P},s}^0\otimes\mathbb{Q}$. Since $M^{\mathcal{B}}_0\in GL(4,\mathbb{Q})$ and special values are (expected to be) irrational, it indicates that they could perhaps be identified among the coefficients of the matrix $\alpha T^\mathcal{B}_{F_c}$. This idea is pursued in sections \ref{s:coeff} and \ref{s:14}, where these coefficients are (numerically) identified using this method.

\bigskip

The group $\mathcal{L}^0_{\mathcal{P},s}$ is in an obvious way a subgroup of a more intrinsic object:
$$\mathcal{L}_{\mathcal{P},s}:=\Big\langle\big\{M(f_c)(s): M\in 
Mon(\mathcal{P})\big\}\Big\rangle.$$
In the simplest case of hypergeometric operators, this group is equal to the already introduced $\mathcal{L}^0(\mathcal{P},s)$, as we briefly show.

\begin{proposition}\label{prop:simple}
Assume that $s$ is a conifold singularity and that the monodromy group $Mon(\mathcal{P})$ is generated by $M_0$ and $M_s$. Then $\mathcal{L}^0_{\mathcal{P},s}=\mathcal{L}_{\mathcal{P},s}$.
\end{proposition}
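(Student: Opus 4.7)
The plan is to work explicitly in the Doran--Morgan basis $\mathcal{B}=(\mathfrak{b}_1,\mathfrak{b}_2,\mathfrak{b}_3,\mathfrak{b}_4)$ associated to the pair $(\mathcal{P},s)$. Since $s$ is a conifold point, $\mathfrak{b}_1=-N_s(v)$ lies in $\operatorname{im}(N_s)=\mathbb{C}\cdot f_c$, so $f_c=\alpha\mathfrak{b}_1$ for some nonzero $\alpha\in\mathbb{C}$. From the Frobenius expansion of $f_2$ at a conifold point ($f_2(t)=(t-s)+O((t-s)^2)$) we have $f_c(s)=0$, whence the key vanishing $\mathfrak{b}_1(s)=0$.

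First I would compute $\mathcal{L}^0_{\mathcal{P},s}$ explicitly. Using $M_0^{\mathcal{B}}$ from Remark \ref{remark:MUM}, the nilpotent operator $N_0:=M_0-\operatorname{Id}$ satisfies $N_0\mathfrak{b}_1=\mathfrak{b}_2$, $N_0\mathfrak{b}_2=d_s\mathfrak{b}_3$, $N_0\mathfrak{b}_3=\mathfrak{b}_4$ and $N_0^4=0$, so for every $n\in\mathbb{Z}$
$$M_0^n(\mathfrak{b}_1)=\mathfrak{b}_1+n\mathfrak{b}_2+d_s\tbinom{n}{2}\mathfrak{b}_3+d_s\tbinom{n}{3}\mathfrak{b}_4.$$
Evaluating at $s$, using $\mathfrak{b}_1(s)=0$, and inspecting the $\mathbb{Z}$-span of the integer triples $\bigl(n,d_s\tbinom{n}{2},d_s\tbinom{n}{3}\bigr)$ as $n$ runs over $\mathbb{Z}$ (the values $n=1,2,3$ and elementary differences suffice), one sees that $M_0^n(f_c)(s)=\alpha\,M_0^n(\mathfrak{b}_1)(s)$ generate exactly the subgroup
$$\Lambda_s:=\alpha\mathbb{Z}\mathfrak{b}_2(s)+\alpha d_s\mathbb{Z}\mathfrak{b}_3(s)+\alpha d_s\mathbb{Z}\mathfrak{b}_4(s)\subset\mathbb{C};$$
hence $\mathcal{L}^0_{\mathcal{P},s}=\Lambda_s$.

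The crux is to bound $\mathcal{L}_{\mathcal{P},s}$ by $\mathcal{L}^0_{\mathcal{P},s}$. I would introduce the sublattice
$$\Lambda:=\mathbb{Z}\mathfrak{b}_1+\mathbb{Z}\mathfrak{b}_2+d_s\mathbb{Z}\mathfrak{b}_3+d_s\mathbb{Z}\mathfrak{b}_4\subset\mathcal{S}ol(t_0)$$
and verify that $\Lambda$ is stable under $M_0^{\pm 1}$ and $M_s^{\pm 1}$. Stability under $M_s^{\pm 1}$ is immediate from the integer matrix $M_s^{\mathcal{B}}$ displayed at the end of Section \ref{s:trans}: $M_s^{\pm 1}$ only adds integer multiples of $\mathfrak{b}_1$ and fixes the other coefficients. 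Stability under $M_0^{\pm 1}$ is a direct inspection: the action of $M_0$ sends $c_1\mathfrak{b}_1+c_2\mathfrak{b}_2+d_s c'_3\mathfrak{b}_3+d_s c'_4\mathfrak{b}_4$ to $c_1\mathfrak{b}_1+(c_1+c_2)\mathfrak{b}_2+d_s(c_2+c'_3)\mathfrak{b}_3+d_s(c'_3+c'_4)\mathfrak{b}_4$, preserving $d_s$-divisibility of the last two coefficients, and the analogous check goes through for $M_0^{-1}=\operatorname{Id}-N_0+N_0^2-N_0^3$. Since by hypothesis $Mon(\mathcal{P})$ is generated by $M_0$ and $M_s$, induction on word length gives $M(\mathfrak{b}_1)\in\Lambda$ for every $M\in Mon(\mathcal{P})$.

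Evaluating at $s$ and once more using $\mathfrak{b}_1(s)=0$, every $M(f_c)(s)=\alpha M(\mathfrak{b}_1)(s)$ lies in $\Lambda_s=\mathcal{L}^0_{\mathcal{P},s}$, proving $\mathcal{L}_{\mathcal{P},s}\subseteq\mathcal{L}^0_{\mathcal{P},s}$; the reverse inclusion is obvious. The only real obstacle is spotting the correct $d_s$-twisted sublattice $\Lambda$: once one notices that $M_0^{\mathcal{B}}$ already introduces a factor of $d_s$ in the $\mathfrak{b}_3$-coefficient on the first application to $\mathfrak{b}_1$, the invariant becomes natural and the rest is bookkeeping with the two explicit matrices supplied by the Doran--Morgan construction.
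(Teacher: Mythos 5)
Your argument is correct, but it is organized differently from the paper's. The paper proves the inclusion $\mathcal{L}_{\mathcal{P},s}\subseteq\mathcal{L}^0_{\mathcal{P},s}$ by induction on the length of a word $M=\prod_i M_0^{k_i}M_s^{l_i}$: since $f_c$ generates $\operatorname{im}(N_s)$ and the lattice $L$ spanned by $\mathcal{B}$ is monodromy-invariant, each factor $M_s^{l_i}$ applied to an element of $L$ only adds an integer multiple of $f_c$, so the $M_s$-factors can be absorbed one at a time until $M(f_c)(s)$ is exhibited as a $\mathbb{Z}$-combination of the generators $M_0^k(f_c)(s)$. You instead produce a single monodromy-invariant object, the $d_s$-twisted lattice $\Lambda=\mathbb{Z}\mathfrak{b}_1+\mathbb{Z}\mathfrak{b}_2+d_s\mathbb{Z}\mathfrak{b}_3+d_s\mathbb{Z}\mathfrak{b}_4$, check stability under the two generators and their inverses directly from the explicit matrices, and then use the vanishing $\mathfrak{b}_1(s)=0$ together with your explicit computation $\mathcal{L}^0_{\mathcal{P},s}=\alpha\bigl(\mathbb{Z}\mathfrak{b}_2(s)+d_s\mathbb{Z}\mathfrak{b}_3(s)+d_s\mathbb{Z}\mathfrak{b}_4(s)\bigr)$ to conclude in one stroke. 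What your route buys is an explicit closed-form description of $\mathcal{L}^0_{\mathcal{P},s}$ as the evaluation at $s$ of an invariant lattice, which is not stated in the paper's proof and ties in naturally with the rank considerations of Sections \ref{s:values} and \ref{s:theorem}; what the paper's route buys is that it never needs the vanishing $f_c(s)=0$ nor the binomial expansion of $M_0^n$, only the rank-one structure of $N_s$. Both arguments rest on the same unproven-in-the-body integrality assertion, namely that $M_s^{\mathcal{B}}\in GL(4,\mathbb{Z})$ (equivalently that the parameter $K$ in the displayed form of $M_s^{\mathcal{B}}$ is an integer, not merely rational): the paper invokes it explicitly at the start of its proof, and you invoke it when you call $M_s^{\mathcal{B}}$ an integer matrix, so you are on equal footing there. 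One small point worth making explicit in a final write-up: evaluation at $s$ of the multivalued solutions $\mathfrak{b}_2,\mathfrak{b}_3,\mathfrak{b}_4$ requires fixing a path of continuation from $t_0$ into a neighbourhood of $s$; this is harmless (the logarithmic term is multiplied by $f_2$, which vanishes at $s$), but it is the same implicit convention the paper uses and deserves a sentence.
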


\begin{proof}
Let $\mathcal{B}$ be the Doran-Morgan basis and let $L$ be the lattice spanned by $\mathcal{B}$ inside $\mathcal{S}ol(t_0)$. Since $M_0^\mathcal{B},M_s^\mathcal{B}\in GL(4,\mathbb{Z})$, $L$ is monodromy invariant. Take some $f\in L$. The conifold period $f_c$ generates $\operatorname{im}(N_s)$ and it follows that for any $l\in\mathbb{Z}$ we have the equality $M_s^l(f)=f+af_c$ for some $a\in\mathbb{Z}$. In particular, this applies to any solution of the form $f=M(f_c)$ for some $M\in Mon(\mathcal{P})$.

Now take any $M\in Mon(\mathcal{P})$ and write $M=\displaystyle\prod_{i=1}^mM_0^{k_i}M_s^{l_i}$ for some integers $k_1,\cdots,k_m,l_1,\cdots,l_m\in\mathbb{Z}$. We want to show that $M(f_c)(s)\in\mathcal{L}^0_{\mathcal{P},s}$. For $m=1$ we get $M_0^{k_1}M_s^{l_1}(f_c)(s)=M_0^{k_1}(f_c)(s)\in\mathcal{L}^0_{\mathcal{P},s}$, since the conifold period $f_c$ is holomorphic in a neighbourhood of $s$. For $m>1$ put $N:=\displaystyle\prod_{i=2}^mM_0^{k_i}M_s^{l_i}$. Then 
\begin{align*}
&M(f_c)=\displaystyle\prod_{i=1}^mM_0^{k_i}M_s^{l_i}(f_c)=M_0^{k_1}M_s^{l_1}\left(\displaystyle\prod_{i=2}^mM_0^{k_i}M_s^{l_i}\right)(f_c)\\
&=M_0^{k_1}M_s^{l_1}N(f_c)=M_0^{k_1}(N(f_c)+af_c)=\displaystyle \prod_{i=2}^mM_0^{k'_i}M_s^{l_i}(f_c)+aM_0^{k_1}(f_c),\\
&\text{where}\quad k'_2:=k_1+k_2,\ k'_i:=k_i \ \text{for}\  i>2\quad\text{and}\quad a\in\mathbb{Z}.
\end{align*}
By definition $M_0^{k_1}(f_c)(s)\in\mathcal{L}^0_{\mathcal{P},s}$, and the fact that $\displaystyle \prod_{i=2}^mM_0^{k'_i}M_s^{l_i}(f_c)(s)\in\mathcal{L}^0_{\mathcal{P},s}$ follows by induction on $m$. Hence evaluating the above equality at $s$ yields $M(f_c)(s)\in\mathcal{L}^0_{\mathcal{P},s}$.
\end{proof}

It is natural to ask whether the conclusion of Proposition \ref{prop:simple} holds in general, i.e. whether the inclusion $\mathcal{L}^0_{\mathcal{P},s}\otimes\mathbb{Q}\subset\mathcal{L}_{\mathcal{P},s}\otimes\mathbb{Q}$ can be strict. For example: if the rank of the subgroup $\mathcal{L}^0_{\mathcal{P},s}$ is 2 and it is generated by period integrals of the rigid Calabi-Yau threefold $\hat{X}_s$, can we perhaps still identify the additional integrals of the singular model by considering the larger group $\mathcal{L}_{\mathcal{P},s}$ instead?

\begin{question}\label{q2}
	Does the equality $\mathcal{L}^0_{\mathcal{P},s}\otimes\mathbb{Q}=\mathcal{L}_{\mathcal{P},s}\otimes\mathbb{Q}$ hold?
\end{question}	

\section{Period integrals at a half-conifold singularity}\label{s:theorem}

The aim of this section is to answer Questions \ref{q1} and \ref{q2} \textit{in a context of half-conifold singularity}. This case presents considerably less difficulties than that of a conifold point, since the local space of solutions naturally decomposes into $1$- and $(-1)$-eigenspaces which are determined by the monodromy. As we will see, this allows us to answer the considered questions. The case of an ordinary conifold point seems to be more complex, e.g. we know of no criterion which would allow to \textit{a priori} decide whether $\mathcal{L}^0_{\mathcal{P},s}$ is a lattice in this situation.

\begin{theorem}\label{th:la}
	Assume that $s$ is a half-conifold singularity. Then $\dim(\mathcal{L}_{\mathcal{P},s}\otimes\mathbb{Q})\leq2$.
\end{theorem}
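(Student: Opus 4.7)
The plan is to observe that the $(-1)$-eigenspace of $M_s$, which contains the conifold period $f_c$, is completely annihilated by evaluation at $s$, so the problem reduces to the $2$-dimensional fixed space of $M_s$, on which the $\mathbb{Q}$-rational structure from Section \ref{s:DM} does the rest of the work.

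Concretely, since $s$ is a half-conifold the minimal polynomial of $M_s$ is $(x-1)(x+1)^2$, so $V := \mathcal{S}ol(t_0)$ splits as $V = V^+ \oplus W^-$, where $V^+ := \ker(M_s - \operatorname{Id})$ and $W^- := \ker((M_s + \operatorname{Id})^2)$ are both of complex dimension $2$; in the Frobenius basis $F_c$ at $s$ they are $\operatorname{span}(e_1, e_4)$ and $\operatorname{span}(e_2, e_3)$, and $f_c = e_2 \in W^-$. The corresponding spectral projector
\begin{equation*}
P^+ \;:=\; \tfrac{1}{4}(M_s + \operatorname{Id})^2
\end{equation*}
onto $V^+$ along $W^-$ is a rational polynomial in $M_s$, and because $M_s$ preserves the monodromy-invariant rational subspace $V_\mathbb{Q} := \mathcal{S}ol_\mathbb{Q}(t_0)$, so does $P^+$. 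Hence $V^+_\mathbb{Q} := V^+ \cap V_\mathbb{Q}$ is a $2$-dimensional $\mathbb{Q}$-subspace of $V_\mathbb{Q}$, and the $\mathbb{C}$-line $\operatorname{im}(M_s^2 - \operatorname{Id}) = \mathbb{C} f_c$ contains a $\mathbb{Q}$-rational generator $f_c^\mathbb{Q}$ with $f_c = \lambda f_c^\mathbb{Q}$ for some $\lambda \in \mathbb{C}^*$.

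The functional $\phi(g) := g(s)$, understood as the limit after analytic continuation to a neighbourhood of $s$, vanishes on $W^-$: indeed, the normalized Frobenius basis from Section \ref{s:frob} satisfies $f_1(s) = 1$ and $f_2(s) = f_3(s) = f_4(s) = 0$, where for $y_3$ the contribution $f_2 \log(t-s) = O((t-s)^{1/2}\log(t-s))$ tends to $0$ as $t \to s$. Therefore $\phi = \phi \circ P^+$ on all of $V$, and for every $M \in Mon(\mathcal{P})$
\begin{equation*}
M(f_c)(s) \;=\; \phi(M f_c) \;=\; \phi(P^+ M f_c) \;=\; \lambda \cdot \phi\!\left(P^+ M f_c^\mathbb{Q}\right).
\end{equation*}
Since $M f_c^\mathbb{Q} \in V_\mathbb{Q}$ and $P^+$ preserves $V_\mathbb{Q}$, we have $P^+ M f_c^\mathbb{Q} \in V^+_\mathbb{Q}$, so every value $M(f_c)(s)$ lies in the fixed $\mathbb{Q}$-subspace $\lambda \cdot \phi(V^+_\mathbb{Q}) \subset \mathbb{C}$, whose $\mathbb{Q}$-dimension is at most $\dim_\mathbb{Q}(V^+_\mathbb{Q}) = 2$. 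This yields the asserted bound $\dim(\mathcal{L}_{\mathcal{P},s} \otimes \mathbb{Q}) \leq 2$.

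I do not anticipate any serious obstacle; once the splitting $V = V^+ \oplus W^-$ and the projector $P^+$ are noticed, the argument is essentially formal. The argument also makes transparent why the ordinary conifold case is genuinely harder: at a conifold $M_s$ has only the eigenvalue $1$ (with a non-trivial Jordan block), so there is no rational spectral projector carving off a $2$-dimensional complement of $\ker(M_s - \operatorname{Id})^2$, and the rationality reduction used above collapses -- consistent with the remark preceding the theorem.
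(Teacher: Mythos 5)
Your proof is correct, but it takes a genuinely different route from the paper. The paper's argument is computational: it first determines all possible forms of $M_s^{\mathcal{B}}$ from the constraints $(M_s^{\mathcal{B}})^2 = (M_s^2)^{\mathcal{B}}$ and $M_s^{\mathcal{B}}e_1=-e_1$, then solves for the corresponding transition matrices $T^{F_c}_{\mathcal{B}}$ case by case, and finally checks that the explicit expression $(e_1)^T T^{\mathcal{B}}_{F_c} Q\, T^{F_c}_{\mathcal{B}} e_2$ is a $\mathbb{Q}$-linear combination of two fixed complex numbers for every $Q\in GL(4,\mathbb{Q})$. You replace all of this by the observation that evaluation at $s$ kills the generalized $(-1)$-eigenspace $W^-=\operatorname{span}(y_2,y_3)$ and that the spectral projector $P^+=\tfrac14(M_s+\operatorname{Id})^2$ onto the $2$-dimensional fixed space is a rational polynomial in $M_s$, hence preserves $\mathcal{S}ol_{\mathbb{Q}}(t_0)$; this pins every value $M(f_c)(s)$ into $\lambda\cdot\phi(V^+_{\mathbb{Q}})$, a $\mathbb{Q}$-space of dimension at most $2$. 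The two proofs use the same essential input (the rational structure coming from $R^3\varphi_*\mathbb{Q}$, which the paper accesses through $M^{\mathcal{B}}\in GL(4,\mathbb{Q})$ and you access through $V_{\mathbb{Q}}$), but your argument is shorter, avoids the case analysis entirely, and does not even require $d_s\neq 0$ or the construction of the Doran--Morgan basis. What it does not deliver is the explicit list of possible transition matrices (\ref{eq:tr1})--(\ref{eq:3}), which the paper reuses in the proof of Corollary \ref{cor} and in the numerical identifications of Section \ref{s:coeff}; so your proof establishes the theorem but would require a separate (though similar, projector-based) argument to recover the corollary. Your closing remark about the conifold case is essentially right, with the small correction that there $\ker(M_s-\operatorname{Id})^2$ is all of $V$; the real point is that the $3$-dimensional kernel of evaluation at $s$, though $M_s$-invariant, is not cut out by any rational polynomial in $M_s$ because $y_1$, $y_2$ and $y_4$ share the eigenvalue $1$.
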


\begin{proof}
To prove this theorem we will determine possible forms of the transition matrix $T^\mathcal{B}_{F_c}$. Then we will use the fact that for all $M\in Mon(\mathcal{P})$ we have
$$
M(f_c)(s)=(e_1)^T\cdot T^\mathcal{B}_{F_c}\cdot M^{\mathcal{B}} \cdot T^{F_c}_\mathcal{B} \cdot e_2
$$
Since $M^{\mathcal{B}}\in GL(4,\mathbb{Q})$, once the transition matrices $T^{F_c}_\mathcal{B}$ and $T^\mathcal{B}_{F_c}$ are identified, the generators of $\mathcal{L}_{\mathcal{P},s}\otimes\mathbb{Q}$ will be determined as well. Since $T^{F_c}_\mathcal{B}=\left(T^\mathcal{B}_{F_c}\right)^{-1}$, we will only consider the form of $T^{F_c}_\mathcal{B}$.

We know that the transition matrix $T^{F_c}_\mathcal{B}$ satisfies $T^{F_c}_\mathcal{B}M_{s}^{F_c}=M_{s}^{\mathcal{B}}T^{F_c}_\mathcal{B}$. In the Doran-Morgan basis $\mathcal{B}$ the monodromy operator $M_{s}^2$ is always given by the same matrix (up to the value of one parameter $K$). However, in general we do not know the form of $M_{s}^{\mathcal{B}}$ itself. Finding possible forms of $M_{s}^{\mathcal{B}}$ will be the first step towards identifying $T^{F_c}_\mathcal{B}$.

As just mentioned, we know that
	\begin{equation*}
	(M_{s}^{\mathcal{B}})^2=
	\begin{pmatrix}
	1&-K&-1&-1\\0&1&0&0\\0&0&1&0\\0&0&0&1\\
	\end{pmatrix},\quad K\in\mathbb{Q}
	\end{equation*}
since the square of the monodromy around a half-conifold singularity is the same as the monodromy around a conifold point. We also know the first column of the matrix $M_{s}^{\mathcal{B}}$:
\begin{equation*}
M_{s}^{\mathcal{B}}=
\begin{pmatrix}
-1&b&c&d\\0&f&g&h\\0&j&k&l\\0&n&o&p\\
\end{pmatrix},
\end{equation*}
because the first element of the basis $\mathcal{B}$ is the conifold period which is an eigenvector of the local monodromy with an eigenvalue $-1$.

Putting those two pieces of information together we obtain a system of equations:
\begin{equation}\label{mateq}
\begin{pmatrix}
0&(f-1)b+jc+dn+K&(k-1)c+bg+do+1&(p-1)d+bh+cl+1\\
0&hn+gj+f^2-1&(f+k)g+ho&(f+p)h+gl\\
0&(f+k)j+ln&gj+k^2+ol-1&(k+p)l+hj\\
0&(f+p)n+oj&(k+p)o+ng&hn+ol+p^2-1\\
\end{pmatrix}
=
\begin{pmatrix}
0&0&0&0\\0&0&0&0\\0&0&0&0\\0&0&0&0\\
\end{pmatrix}
\end{equation}

\noindent The number of unknowns can be reduced using the fact that $\det M_{s}^{\mathcal{B}}=\det M_{s}^{F_c}=1$. For simplicity, let us assume that $fk-gj\neq 0$; the remaining cases are dealt with in a similar manner. Here we obtain
$$p=\frac{flo-gln-hjo+hkn-1}{fk-gj}$$
Resulting system can easily be solved, e.g. using Maple (or, with not much difficulty, by hand), which gives us four possible forms of $M_{s}^{\mathcal{B}}$:

\begin{equation*}
\begin{pmatrix}
-1&\tfrac{(o+Kg+2)c-do-1}{g}&c&d\\
0&Kg+1&g&g\\
0&-K(o+Kg+2)&-Kg-o-1&-Kg-o-2\\
0&oK&o&o+1\\
\end{pmatrix},\quad
\begin{pmatrix}
-1&b&\tfrac{(l+2)d-1}{l}&d\\
0&1&0&0\\
0&lK&l+1&l\\
0&-K(l+2)&-l-2&-l-1\\
\end{pmatrix},$$
$$\begin{pmatrix}
-1&b&c&\tfrac{1}{2}\\
0&1&0&0\\
0&0&1&0\\
0&-2K&-2&-1\\
\end{pmatrix},\quad
\begin{pmatrix}
-1&\tfrac{K}{2}&\tfrac{1}{2}&\tfrac{1}{2}\\
0&-1&0&0\\
0&0&-1&0\\
0&0&0&-1\\
\end{pmatrix}.
\end{equation*}
To find the transition matrix $T^{F_c}_\mathcal{B}$ corresponding to each of this possibilities, we must solve the system of (this time, linear) equations $$T^{F_c}_\mathcal{B}M_{s}^{F_c}-M_{s}^{\mathcal{B}}T^{F_c}_\mathcal{B}=0$$
This way we obtain again four possibilities:
\begin{equation}\label{eq:tr1}
T^{F_c}_\mathcal{B}=\begin{pmatrix}
-\tfrac{-co+do-2c+1}{2g}t_{21}-\tfrac{c-d}{2}t_{41}&-\tfrac{1}{g}t_{23}&t_{13}&-\tfrac{-co+do-2c+1}{2g}t_{24}-\tfrac{c-d}{2}t_{44}\\
t_{21}&0&t_{23}&t_{24}\\
-Kt_{21}-t_{41}&0&-\tfrac{o+Kg+2}{g}t_{23}&-Kt_{24}-t_{44}\\
t_{41}&0&\tfrac{o}{g}t_{23}&t_{44}\\
\end{pmatrix},
\end{equation}
\begin{equation}
T^{F_c}_\mathcal{B}=\begin{pmatrix}
-\tfrac{Kdl-bl}{2l}t_{21}-\tfrac{-2d+1}{2l}t_{31}&-\tfrac{1}{l}t_{33}&t_{13}&-\tfrac{Kdl-bl}{2l}t_{24}-\tfrac{-2d+1}{2l}t_{34}\\
t_{21}&0&0&t_{24}\\
t_{31}&0&0&t_{24}\\
-Kt_{21}-t_{31}&0&-\tfrac{l+2}{l}t_{33}&-Kt_{24}-t_{34}\\
\end{pmatrix},
\end{equation}
\begin{equation}
T^{F_c}_\mathcal{B}=\begin{pmatrix}
(-\tfrac{K}{4}+\tfrac{b}{2})t_{21}+(\tfrac{c}{2}-\tfrac{1}{4})t_{31}&t_{12}&t_{13}&(-\tfrac{K}{4}+\tfrac{b}{2})t_{24}+(\tfrac{c}{2}-\tfrac{1}{4})t_{34}\\
t_{21}&0&0&t_{24}\\
t_{31}&0&0&t_{34}\\
-Kt_{21}-t_{31}&0&2t_{12}&-Kt_{24}-t_{34}\\
\end{pmatrix},
\end{equation}
\begin{equation}
T^{F_c}_\mathcal{B}=\begin{pmatrix}
0&t_{12}&t_{13}&0\\
0&0&t_{23}&0\\
0&0&-Kt_{23}+2t_{12}-t_{43}&0\\
0&0&t_{43}&0\\
\end{pmatrix}.
\end{equation}
for some $t_{ij}\in\mathbb{C}$.

The last matrix cannot be a transition matrix, which leaves us with three possibilities. Take any matrix $Q=(q_{ij})_{i,j=1,\cdots,4}\in GL(4,\mathbb{Q})$. We can directly check that in those three cases the expression $(e_1)^T\cdot T^\mathcal{B}_{F_c}\cdot Q \cdot T^{F_c}_\mathcal{B}\cdot e_2$ equals respectively:

\begin{equation}\label{eq:1}
\tfrac{t_{23}}{2g(t_{24}t_{41}-t_{21}t_{44})}\cdot\Big((-Koq_{21}-(o+2)q_{41}-oq_{31})t_{24}+(g(Kq_{21}+q_{31}+q_{41})+2q_{21})t_{44}\Big),
\end{equation}
\begin{equation}\label{eq:2}
\tfrac{t_{33}}{2l(t_{21}t_{34}-t_{24}t_{31})}\cdot\Big((l(Kq_{21}+q_{31}+q_{41})+2q_{31})t_{24}-2q_{21}t_{34}\Big),
\end{equation}
\begin{equation}\label{eq:3}
\tfrac{t_{12}}{t_{21}t_{34}-t_{24}t_{31}}\cdot\Big(-q_{31}t_{24}+q_{21}t_{34}\Big).
\end{equation}
Thus in all cases $\mathcal{L}_{\mathcal{P},s}\otimes\mathbb{Q}$ is generated by two elements.
\end{proof}

The only obstruction to strengthening the conclusion of Theorem \ref{th:la} to $\dim(\mathcal{L}_{\mathcal{P},s}\otimes\mathbb{Q})=2$ is the possibility that the coefficients $t_{24}$ and $t_{44}$ are linearly dependent over $\mathbb{Q}$. This possibility cannot be excluded on the basis of our current considerations, as it requires the knowledge of exact values of the coefficients of $T^{F_c}_\mathcal{B}$. We present our results in this direction in the following section. Before we do, let us quickly show how Theorem \ref{th:la} allows us to answer Question \ref{q2} as well.

\begin{corollary}\label{cor}
$\mathcal{L}_{\mathcal{P},s}^0\otimes\mathbb{Q}=\mathcal{L}_{\mathcal{P},s}\otimes\mathbb{Q}$
\end{corollary}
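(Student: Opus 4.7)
The plan is to combine the parametric description of $T^{F_c}_\mathcal{B}$ produced in the proof of Theorem \ref{th:la} with the explicit first column of $(M_0^n)^\mathcal{B}$ coming from Remark \ref{remark:MUM}. Since $\mathcal{L}^0_{\mathcal{P},s}\otimes\mathbb{Q}\subseteq\mathcal{L}_{\mathcal{P},s}\otimes\mathbb{Q}$ is tautological, the content of the corollary is the reverse inclusion: every value $M(f_c)(s)$ with $M\in Mon(\mathcal{P})$ should already lie in the $\mathbb{Q}$-span of $\{M_0^n(f_c)(s):n\in\mathbb{Z}\}$.

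First I would revisit formulas (11)--(13). In each of the three admissible cases the quantity
$$M(f_c)(s)=(e_1)^T\cdot T^\mathcal{B}_{F_c}\cdot M^\mathcal{B}\cdot T^{F_c}_\mathcal{B}\cdot e_2$$
has the shape $C\cdot\bigl(A(q)\,t_i+B(q)\,t_j\bigr)$, where $C,t_i,t_j\in\mathbb{C}$ are determined by the $t_{ab}$'s and independent of $M$; $q$ consists of the relevant lower entries of the first column of $M^\mathcal{B}$ (which lie in $\mathbb{Q}$ because $\mathcal{B}$ is a rational basis); and $A,B$ are linear functionals whose coefficients can be read off the formulas. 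A direct check of the three coefficient matrices shows that $A$ and $B$ are $\mathbb{Q}$-linearly independent in every case, using the non-degeneracy conditions ($g\neq 0$, $o\neq -1$, $l\neq 0$) inherent to the respective parametrisations; hence $(A,B)$ is surjective onto $\mathbb{Q}^2$.

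Next I would compute the first column of $(M_0^n)^\mathcal{B}$. By Remark \ref{remark:MUM}, $M_0^\mathcal{B}=\operatorname{Id}+N$ with $N$ strictly lower triangular, $N^4=0$, and subdiagonal entries $1,d,1$, where $d=d_{M_s^n}\neq 0$. The expansion $(\operatorname{Id}+N)^n=\operatorname{Id}+nN+\binom{n}{2}N^2+\binom{n}{3}N^3$ gives first column $(1,n,\binom{n}{2}d,\binom{n}{3}d)^T$. Evaluating at $n=1,2,3$ produces three triples $(q_{21},q_{31},q_{41})$ whose $3\times 3$ matrix has determinant $d^2\neq 0$; hence as $n$ varies over $\mathbb{Z}$ the family of relevant entries of the first column $\mathbb{Q}$-spans all of $\mathbb{Q}^3$ (and a fortiori all of $\mathbb{Q}^2$ in case 3).

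Combining the two steps: by the spanning property of the $q_n$ and the surjectivity of $(A,B)$, the $\mathbb{Q}$-span of $\{M_0^n(f_c)(s):n\in\mathbb{Z}\}$, namely $\mathcal{L}^0_{\mathcal{P},s}\otimes\mathbb{Q}$, coincides with $C\cdot\operatorname{span}_\mathbb{Q}\{t_i,t_j\}$. The same formulas force $\mathcal{L}_{\mathcal{P},s}\otimes\mathbb{Q}$ to lie in $C\cdot\operatorname{span}_\mathbb{Q}\{t_i,t_j\}$ as well, because $q_M\in\mathbb{Q}^3$ for every $M\in Mon(\mathcal{P})$. The two containments yield the desired equality. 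I expect the only real obstacle to be the uniform check that $A$ and $B$ are linearly independent across the three parametrisations (7)--(9); beyond that, the argument is straightforward linear algebra.
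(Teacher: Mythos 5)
Your proof is correct and takes essentially the same route as the paper: both arguments rest on formulas (\ref{eq:1})--(\ref{eq:3}) from the proof of Theorem \ref{th:la} together with the explicit first column of powers of $M_0^{\mathcal{B}}$, and both hinge on $d_s\neq 0$ to show that finitely many powers of $M_0$ already produce a spanning set of values. The paper substitutes $Q=M_0^{\mathcal{B}},(M_0^{\mathcal{B}})^{-1},(M_0^{\mathcal{B}})^{2}$ and manipulates the resulting combinations of $t_{24},t_{44}$ by hand, whereas you package the same computation as surjectivity of the pair of functionals $(A,B)$ composed with the spanning of the triples $\bigl(n,\binom{n}{2}d_s,\binom{n}{3}d_s\bigr)$ for $n=1,2,3$ --- a purely organizational difference.
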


\begin{proof}
We present the proof in the case when the transition matrix $T^{F_c}_\mathcal{B}$ is of the form (\ref{eq:tr1}); other cases are treated completely analogously.
 
From the proof of Theorem \ref{th:la} one sees that $\mathcal{L}_{\mathcal{P},s}\otimes\mathbb{Q}$ is generated by $\alpha t_{24}$ and $\alpha t_{44}$, where
$$
\alpha:=\frac{t_{23}}{2g(t_{24}t_{41}-t_{21}t_{44})}\neq 0
$$
We know the form of $M_0^\mathcal{B}$ (see Remark \ref{remark:MUM}):
\begin{equation*}
M_0^\mathcal{B}=\begin{pmatrix}
1&0&0&0\\1&1&0&0\\0&d_s&1&0\\0&0&1&1\\
\end{pmatrix}
\end{equation*}
Thus we can directly compute elements of $\mathcal{L}_{\mathcal{P},s}^0$. Using the formula (\ref{eq:1}) respectively for $Q=M_0^\mathcal{B},\left(M_0^\mathcal{B}\right)^{-1},\left(M_0^\mathcal{B}\right)^2$, we obtain
\begin{eqnarray*}
	\tfrac{1}{\alpha}\cdot M_0(f_c)(s)&=&oK\cdot t_{24}-(gK+2)\cdot t_{44}\\
	\tfrac{1}{\alpha}\cdot M_0^{-1}(f_c)(s)&=&-(oK+2d_s)\cdot t_{24}+(gK+2)\cdot t_{44}\\
	\tfrac{1}{\alpha}\cdot M_0^2(f_c)(s)&=&o(2K+d_s)\cdot t_{24}-\left(2(gK+2)+gd_s\right)\cdot t_{44}
\end{eqnarray*}
Here all coefficients in front of $t_{24}$ and $t_{44}$ are rational.

Note that $\tfrac{1}{\alpha}\cdot\mathcal{L}^0_{\mathcal{P},s}\ni M_0(f_c)(s)+ M_0^{-1}(f_c)(s)=-2d_st_{24}$. By our general assumption $d_s\neq 0$, since it is a necessary condition for the construction of the Doran-Morgan basis $\mathcal{B}$. Thus we conclude that $\alpha t_{24}\in\mathcal{L}^0_{\mathcal{P},s}\otimes\mathbb{Q}$, and consequently that $(gK+2)t_{44},\left(2(gK+2)+gd_s\right)t_{44}\in\tfrac{1}{\alpha}\mathcal{L}^0_{\mathcal{P},s}\otimes\mathbb{Q}$. Hence $\alpha t_{44}\in\mathcal{L}^0_{\mathcal{P},s}\otimes\mathbb{Q}$ as well.
\end{proof}

\section{Transition matrix from the Doran-Morgan basis
	\\
	to the Frobenius basis at a half-conifold point}\label{s:coeff}

In the previous section we have determined the possible forms of the transition matrix $T^{F_c}_\mathcal{B}$ in the case of a half-conifold singularity. Their coefficients are rational linear combinations of six unknown parameters $t_{ij}$. These considerations do not, obviously, give us any information on what the values of the coefficients $t_{ij}$ are. The purpose of this section is to present results concerning values of the parameters $t_{ij}$ in the context of operators associated with families of double octics considered in \cite{Chmiel}.

To be more precise, operators considered here are not identical to those presented in \cite{Chmiel}. There one considered operators which were \textit{actual} Picard-Fuchs operators of the corresponding families of double octics. However, these operators can often be simplified which makes them easier for numerical experiments. The most common example of this phenomena is the situation when an operator $\mathcal{P}$ is a pull-back of some different operator $\mathcal{Q}$ of smaller degree (see \cite{Cynk-van Straten_2}). In this situation it may happen that $\mathcal{P}$ has a conifold singularity but for $\mathcal{Q}$ it becomes a half-conifold one. This provides us with many more examples of half-conifold points than were originally present in operators from \cite{Chmiel}.

At the beginning of this section we must place a single disclaimer. All results presented here were observed numerically. The Frobenius method can be easily implemented in Maple which allows us to compute numerical approximations of the monodromy action on any solution of $\mathcal{P}=0$ (see \cite{vanEnckevort-vanStraten}, \cite{Hofmann}, \cite{Chmiel}). This way we can compute approximations of the transition matrix $T^{F_c}_\mathcal{B}$ as well, and then attempt to numerically identify its coefficients. At present, we do not have proofs of the presented results in the classical sense.

\subsection{Exemplary calculations. Operator 2.17}\ \newline\par
Contrary to the situation at a point of maximal unipotent monodromy, there is not much anticipation on how the transition matrix between the Doran-Morgan basis and the Frobenius basis at a conifold point should look like. Instead of trying to work in the greatest possible generality, which would only cloud the picture, it is perhaps better to begin with an explicit example. We have chosen a particularly simple operator, nevertheless, the phenomena observed for it are common amongst all considered examples.

We will study the operator \textbf{2.17} which is given by
\begin{center}
	$\mathcal{P}=$
	\(\displaystyle {\Theta}^{4}\)
	\mbox{\(\displaystyle\; - \; 2 ^{4}t(2\,\Theta+1)^{2}(8\,{\Theta}^{2}+8\,\Theta+3)\)}
	\mbox{\(\displaystyle\; + \; 2 ^{12}t^{2}(2\,\Theta+1)^{2}(2\,\Theta+3)^{2}\)}
\end{center}
Its singularities can be presented in the form of the so called \textit{Riemann scheme}:
\[\left\{\begin{tabular}{*{3}c}
0& $\frac{1}{256}$& $\infty$\\ 
\hline
0& 0& 1/2\\
0& 1/2& 1/2\\
0& 1/2& 3/2\\
0& 1& 3/2\\
\end{tabular}\right\}\]	
Here each column corresponds to a singularity of the operator and the numbers under the singularity denote roots of the corresponding indicial equation. Thus 0 is a MUM point and $s=\tfrac{1}{256}$ is a half-conifold singularity.

We will denote the normalized Frobenius basis at $s=\tfrac{1}{256}$ by $y_1,y_2,y_3,y_4$, as in Section \ref{s:frob}. This means that $y_1(\tfrac{1}{256})=1$, $y_2=f_c$ is the conifold period, $y_3$ is the only solution in the Frobenius basis containing logarithm and $y_4$ is the unique solution such that $y_4=O(t-\tfrac{1}{256})$. As for the choice of $v$ in the Doran-Morgan basis $\mathcal{B}$, we normalize it so that $v(0)=1$.

The fiber $X_s$ at the half-conifold singularity at $\tfrac{1}{256}$ is birational to a rigid double octic and rank of the group $\mathcal{L}^0_{\mathcal{P},s}$ is 2. It is a lattice generated by $4L(f,1)$ and $4\frac{L(f,2)}{2\pi i}$, where $f$ is the unique modular form of weight 4 and level 8, associated with a smooth model of the singular fiber $X_s$ by the modularity theorem.

Monodromy matrices in the generalized Doran-Morgan basis $\mathcal{B}$ are
\begin{equation*}
M^{\mathcal{B}}_0=
\begin{pmatrix}
1&0&0&0\\1&1&0&0\\0&32&1&0\\0&0&1&1\\
\end{pmatrix}\quad\text{and}\quad
M^{\mathcal{B}}_s=
\begin{pmatrix}
-1&0&\frac{1}{4}&\frac{1}{2}\\0&-1&-\frac{1}{4}&-\frac{1}{4}\\0&32&5&4\\0&-32&-4&-3\\
\end{pmatrix}
\end{equation*}
Solving the system of equations
$$T^{F_c}_\mathcal{B}M_{s}^{F_c}-M_{s}^{\mathcal{B}}T^{F_c}_\mathcal{B}=0$$
in this particular case, we see that
\begin{equation*}
T^{F_c}_\mathcal{B}=
\begin{pmatrix}
a_1&4a_2&a_3&a_4\\
a_5&0&a_2&a_6\\
-8a_1-16a_5&0&-16a_2&-8a_4-16a_6\\
8a_1+8a_5&0&16a_2&8a_4+8a_6\\
\end{pmatrix},\quad a_i\in\mathbb{C}
\end{equation*}
This corresponds to the case (\ref{eq:tr1}) considered in the proof of Theorem \ref{th:la}. Computing approximations of the coefficients $a_i$ using Maple we have obtained the following:

\begin{theorem}\label{ob:approx}
	With a relative error of at most $10^{-195}$ the transition matrix $T^{F_c}_\mathcal{B}$ from the Frobenius basis at $\tfrac{1}{256}$ to the Doran-Morgan basis $\mathcal{B}$ for the operator \textbf{2.17} is given by
	\begin{equation*}
	\begin{pmatrix}\bigskip
	\frac{128L(f,1)a_5+\pi}{128 L(f,2)}\pi i&-\frac{\pi}{128}&\frac{\pi}{256}+(\frac{-1+2\log(2)}{128})i&-\frac{\pi^2 }{256}L(f,1)i\\\bigskip
	a_5&0&-\frac{\pi}{512}&-\frac{\pi}{256}L(f,2)\\\bigskip
	-16a_5-\frac{128L(f,1)a_5+\pi}{16 L(f,2)}\pi i&0&\frac{\pi}{32}&\frac{\pi}{16}L(f,2)+\frac{\pi^2 }{32}L(f,1)i\\
	8a_5+\frac{128L(f,1)a_5+\pi}{16 L(f,2)}\pi i&0&-\frac{\pi}{32}&-\frac{\pi}{32}L(f,2)-\frac{\pi^2 }{32}L(f,1)i\\
	\end{pmatrix}
	\end{equation*}
\end{theorem}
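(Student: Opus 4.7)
The argument is numerical: as noted at the top of Section~\ref{s:coeff}, no closed-form proof is available, and the plan is to pin down the six free parameters that Theorem~\ref{th:la} leaves in the transition matrix to enough digits that integer-relation detection identifies them, with the exception of one parameter that appears to be a genuinely new constant. First I would use the Frobenius method in Maple, as in \cite{vanEnckevort-vanStraten}, \cite{Hofmann}, \cite{Chmiel}, to compute to roughly $200$ significant digits both the normalized Frobenius basis $F_c$ at $s=\tfrac{1}{256}$ and a holomorphic basis at $t=0$. Analytic continuation along a chain of overlapping discs connecting $0$ to $\tfrac{1}{256}$ expresses $F_c$ in the local basis at $0$; building the Doran-Morgan basis $\mathcal{B}$ from the normalization $v(0)=1$ via the constructions of Sections~\ref{s:DM} and~\ref{s:trans} then yields a high-precision $4\times 4$ approximation of $T^{F_c}_{\mathcal{B}}$. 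The relation $T^{F_c}_{\mathcal{B}} M_s^{F_c}=M_s^{\mathcal{B}} T^{F_c}_{\mathcal{B}}$, using the explicit $M_s^{\mathcal{B}}$ displayed for operator \textbf{2.17}, forces this matrix into the six-parameter shape of case~(\ref{eq:tr1}) from the proof of Theorem~\ref{th:la}, so the numerical values of $a_1,\ldots,a_6$ can be read off directly.

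Next, for each $a_i$ I would run an integer-relation search (PSLQ or LLL) against a small candidate basis built from $1$, $i$, $\pi$, $\log 2$, $L(f,1)$, $L(f,2)$ and their natural products, where $f$ is the weight-$4$ level-$8$ newform attached to the rigid model of $X_s$ by the modularity theorem. The guiding heuristic, from \cite{Chmiel} and Section~\ref{s:values}, is that every value $M(f_c)(s)$ for $M\in Mon(\mathcal{P})$ must lie in the $\mathbb{Q}(\pi,i)$-span of $L(f,1)$ and $L(f,2)/(2\pi i)$; this predicts, and PSLQ confirms, that $a_4$ is a rational multiple of $L(f,1)\pi^2 i$ and that $a_6$ is a rational multiple of $L(f,2)\pi$, matching the claimed formulas, and it also fixes $a_2$ and $a_3$ as explicit small-denominator combinations of $\pi$ and $\log 2$.

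The main obstacle is the remaining parameter $a_5$: PSLQ finds no relation between $a_5$ and any reasonable candidate basis, so $a_5$ must be left as an unidentified constant in the statement. With $a_5$ treated as a formal parameter, the coefficient $a_1$ is then pinned down by enforcing that $(e_1)^{T} T^{\mathcal{B}}_{F_c} M_0^{\mathcal{B}} T^{F_c}_{\mathcal{B}} e_2$ lie in $\mathcal{L}^0_{\mathcal{P},s}\otimes\mathbb{Q}=L(f,1)\mathbb{Q}\oplus (L(f,2)/2\pi i)\mathbb{Q}$, which produces the stated $\mathbb{Q}$-linear dependence $a_1=(128 L(f,1)\,a_5+\pi)\,\pi i/(128\, L(f,2))$. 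All other entries of $T^{F_c}_{\mathcal{B}}$ are then determined by substitution into the case~(\ref{eq:tr1}) template, and one verifies that the resulting closed form reproduces the numerically computed matrix to within the claimed relative error of $10^{-195}$, completing the identification.
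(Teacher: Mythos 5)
Your proposal follows essentially the same route as the paper: a high-precision Frobenius/analytic-continuation computation, reduction to the six-parameter shape of case~(\ref{eq:tr1}) via the intertwining relation $T^{F_c}_{\mathcal{B}}M_s^{F_c}=M_s^{\mathcal{B}}T^{F_c}_{\mathcal{B}}$, numerical identification of $a_2,a_3,a_4,a_6$ against $\pi$, $\log 2$ and the special values of the weight-$4$ level-$8$ newform, with $a_5$ left unidentified and $a_1$ tied to $a_5$ through the monodromy action on the conifold period. The only cosmetic difference is that the paper pins $a_1$ down via the concretely identified value $M_0(y_2)(\tfrac{1}{256})=4\tfrac{L(f,2)}{2\pi i}$ substituted into identity~(\ref{eq:1}), rather than your slightly looser ``membership in $\Lambda_f\otimes\mathbb{Q}$'' formulation, but the computation is the same.
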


We will now shortly describe how the entries of the transition matrix were identified. Computing the scaling factor for the conifold period $f_c$, which appears in both bases, we find that 
\begin{equation*}
y_2=-\frac{\pi}{128}N_s^2(-v)
\end{equation*}
and so $a_2=-\frac{\pi}{512}$.
For the second solution in the $(-1)$-eigenspace, its representation in $\mathcal{B}$ is numerically identified to be
\begin{equation*}
y_3=\big( \frac{\pi}{256}+\alpha i\big) N_s^2(-v)-\frac{\pi}{512}N_0(N_s^2(-v))+\frac{\pi}{32}N_0^2(N_s^2(-v))-\frac{\pi}{32}v,
\end{equation*}
where $\alpha=\frac{-1+2\log(2)}{128}$.

Thus coefficients of $T^{F_c}_\mathcal{B}$ associated with the $(-1)$-eigenspace seem rather simple. The only exception is the appearance of $\log(2)$ but this term can be eliminated by an appropriate scaling as follows. Changing the variable $t\mapsto t+s$ we may assume $s=0$. After scaling $t\mapsto\alpha t$ we get new normalized Frobenius basis $\{\widetilde{y_1},\dots,\widetilde{y_4}\}$ such that (up to multiplication by powers of $\alpha$) $\widetilde{y_1}(t)=y_1(\alpha t)$, $\widetilde{y_2}(t)=y_2(\alpha t)$, $\widetilde{y_4}(t)=y_4(\alpha t)$ and most
notably $\widetilde{y_3}(t)=y_3(\alpha t)-\tfrac{1}{2\pi i}\log(\alpha)y_2(\alpha t)$. Thus taking $\alpha=8$ we get
\begin{equation*}
\widetilde{y_3}=\big( \frac{\pi}{256}-\frac{1}{128}i\big) N_{\gamma^2}(-v)-\frac{\pi}{512}N_0(N_{\gamma^2}(-v))+\frac{\pi}{32}N_0^2(N_{\gamma^2}(-v))-\frac{\pi}{32}v
\end{equation*}
and the term with $\log(2)$ disappears.

Now we turn to the $1$-eigenspace. As already mentioned, $y_4$ (up to scaling) is uniquely determined: it is the only solution in $O(t-\tfrac{1}{256})$. Comparing coefficients of its representation in the basis $\mathcal{B}$ with the special values $L(f,1)$ and $L(f,2)$, we find
\begin{align*}
y_4=&-\frac{\pi^2 L(f,1)}{256}iN_s^2(-v)-\frac{\pi L(f,2)}{256}N_0(N_s^2(-v))
\\&+\Big(\frac{\pi L(f,2)}{16}+\frac{\pi^2L(f,1)}{32}i \Big)N_0^2(N_s^2(-v))-\Big(\frac{\pi L(f,2)}{32}+\frac{\pi^2L(f,1)}{32}i \Big)v.
\end{align*}
Hence $a_4=-\frac{\pi^2 L(f,1)}{256}i$ and $a_6=-\frac{\pi L(f,2)}{256}$.

Now we consider the solution $y_1$. It is the only solution in $F_c$ with non-zero value at $\frac{1}{256}$. Computing $\mathcal{L}_{\mathcal{P},\tfrac{1}{256}}$ by analytic continuation as in \cite{Chmiel}, we find that $M_0(y_2)(\tfrac{1}{256})=4\frac{L(f,2)}{2\pi i}$. Since
$$-\frac{128}{\pi}M_0(y_2)=M_0(N_s^2(-v))=N_s^2(-v)+N_0(N_s^2(-v)),$$
evaluating at $\frac{1}{256}$ yields
$$-\frac{512}{\pi}\frac{L(f,2)}{2\pi i}=N_0(N_s^2(-v))(\tfrac{1}{256})=e_1^T\cdot T^{\mathcal{B}}_{F_c}\cdot e_2$$
This way we obtain the equation
$$\frac{512}{\pi}\frac{L(f,2)}{2\pi i}=\frac{2L(f,2)}{L(f,2)a_1-\pi iL(f,1)a_5}.$$
Hence
$$a_1=\frac{128L(f,1)a_5+\pi}{128 L(f,2)}\pi i.$$
This is nothing more than using the identity (\ref{eq:1}), which relates values of the conifold period after monodromy with the coefficients of the transition matrix $T^\mathcal{B}_{F_c}$.

Combining all information obtained so far, we see that the transition matrix $T^{F_c}_\mathcal{B}$ indeed has the form as in the Observation \ref{ob:approx}.
Unlike with other coefficients, we were unable to identify
$$a_5=-2.86962386860844996439493221586251437058113644169225...$$

\subsection{General results}\ \newline\par

Similar procedure allows us to identify coefficients of the transition matrix for other operators with half-conifold singularities. In all considered cases the transition matrix has the form
\begin{equation}\label{eq:tr2}
T^{F_c}_\mathcal{B}=\begin{pmatrix}
q_1t_{21}+q_2t_{41}&t_{23}&t_{13}&q_1t_{24}+q_2t_{44}\\
t_{21}&0&q_3t_{23}&t_{24}\\
-Kt_{21}-t_{41}&0&q_4t_{23}&-Kt_{24}-t_{44}\\
t_{41}&0&q_5t_{23}&t_{44}
\end{pmatrix},\quad q_i\in \mathbb{Q}
\end{equation}
i.e. it correspond to the case (\ref{eq:tr1}) listed in the proof of Theorem \ref{th:la}. What we have found, similarly to the case of operator \textbf{2.17} discussed above, is the following:

\begin{theorem}[numerical]\label{th:num}
Let $\mathcal{P}$ be a Picard-Fuchs operator of a family of double octics with a MUM point at 0 and a half-conifold singularity at $s$ with $d_s\neq 0$. Assume that $X_s$ is birational to a rigid double octic $\hat{X_s}$ defined over $\mathbb{Q}$ and let $f$ be the modular form associated to $\hat{X_s}$. Put $K:=\mathbb{Q}[\sqrt{2},i]$.

Then the transition matrix $T^{F_c}_\mathcal{B}$ has the form (\ref{eq:tr2}) and:
\begin{itemize}
\item $\tfrac{1}{2\pi i}t_{23}\in K$;
\item after appropriate scaling $t_{13} \in K$;
\item $t_{24},t_{44}\in\Lambda_f\otimes K$, where $\Lambda_f=L(f,1)\mathbb{Z}\oplus\tfrac{L(f,2)}{2\pi i}\mathbb{Z}$;
\item $t_{41}$ is an element of the field $K(L(f,1),L(f,2),\pi,t_{21})$.
\end{itemize}
\end{theorem}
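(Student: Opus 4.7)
The plan follows the template established by the calculation for operator \textbf{2.17}, extended systematically across the database of double-octic Picard--Fuchs operators with half-conifold singularities. First, apply Theorem \ref{th:la}: the form of $T^{F_c}_\mathcal{B}$ is constrained to one of three possibilities. In every example one checks numerically that the transition matrix falls into the shape (\ref{eq:tr1}), which we rewrite as (\ref{eq:tr2}); the other two possibilities correspond to degenerate specializations of $M_s^\mathcal{B}$ that do not occur for half-conifold points arising from the double-octic setting.

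Next, identify the two columns associated with the $(-1)$-eigenspace of $M_s$. The conifold period $f_c = y_2$ is a scalar multiple of $N_s(-v)$, so only the single parameter $t_{23}$ is free in the second column; its value is read off by comparing high-precision series expansions of $y_2$ against $N_s(-v)$, and in every considered case $t_{23}/(2\pi i)$ lands in $K = \mathbb{Q}[\sqrt{2},i]$. The solution $y_3$ is determined only up to addition of multiples of $y_2$ and up to the choice of branch of $\log(t-s)$; after the rescaling $t-s \mapsto \alpha(t-s)$ with $\alpha$ chosen to eliminate spurious logarithmic constants (exactly as $\alpha = 8$ did for operator \textbf{2.17}), the resulting coefficient $t_{13}$ is again numerically identified as an element of $K$.

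For the $(+1)$-eigenspace, use that $y_4$ is the unique solution with leading behavior $(t-s)^{2/n} = (t-s)$ at the half-conifold, hence rigidly determined up to scalar. Expanding $y_4$ in the Doran--Morgan basis and matching each coefficient numerically against $\Lambda_f \otimes K$ via an integer-relation algorithm (PSLQ) produces the claimed form of $t_{24}$ and $t_{44}$; that the coefficients actually lie in $\Lambda_f \otimes K$ is consistent with the result of \cite{Chmiel} that $\mathcal{L}^0_{\mathcal{P},s}$ is commensurable with $\Lambda_f$ under the assumptions of the theorem. Finally, $t_{41}$ is not free: substituting the known value $M_0(f_c)(s) \in \mathcal{L}^0_{\mathcal{P},s} \otimes \mathbb{Q}$, obtained by analytic continuation as in \cite{Chmiel}, into identity (\ref{eq:1}) produces a single linear relation that expresses $t_{41}$ as a rational function of $t_{21}$ with coefficients in $K(L(f,1),L(f,2),\pi)$, which is exactly the last assertion of the theorem.

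The central obstacle — and the reason the statement is labeled a numerical theorem — is that each identification step terminates with a PSLQ-style recognition of a real or complex constant from its high-precision approximation, rather than with an a priori structural derivation. In particular, the appearance of $\sqrt{2}$ in the field $K$ is mysterious; one naturally suspects it enters through the quadratic pull-back that converts an ordinary conifold into a half-conifold, but turning this heuristic into an actual proof would require a systematic comparison of the Frobenius bases before and after pull-back together with control of how the Doran--Morgan data transforms, neither of which is developed here. Consequently, the feasible plan is verification across the full list of relevant double-octic operators rather than a uniform structural argument.
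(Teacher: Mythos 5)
Your proposal follows essentially the same route as the paper: the paper's "proof" of this numerical theorem is precisely the worked identification for operator \textbf{2.17} (read off $t_{23}$ from the scaling of the conifold period, rescale $t-s$ to kill the $\log$ constant in $t_{13}$, match the coefficients of the unique solution $y_4=O(t-s)$ against $L(f,1)$ and $L(f,2)/2\pi i$, and extract $t_{41}$ from the relation $M_0(f_c)(s)\in\Lambda_f\otimes K$ via identity (\ref{eq:1})), repeated over the full list of double-octic operators and tabulated in the Appendix. Your account of which steps are structural (the constraint to form (\ref{eq:tr2}), the linear relation fixing $t_{41}$ in terms of the unidentified $t_{21}$) versus which are pure numerical recognition matches the paper's presentation.
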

\noindent The reason we see coefficients from a number field $K$ and not $\mathbb{Q}$ as in the Observation \ref{ob:approx} is the fact that the family in question can be \textit{twisted}, which corresponds to the twist of the associated modular form by a Dirichlet character. This issue, although interesting, is not important here.

Amongst the conditions listed above, the first two are rather natural. The fourth one follows from the previous ones when combined with the fact, established in \cite{Chmiel}, that $M_0(f_c)(s)\in\Lambda_f\otimes K$. The most surprising is the fact that the coefficients of the solution $y_4=O(t-s)$ are elements of $\Lambda_f\otimes K$ as well. Indeed, the fact that $M_0(f_c)(s)\in\Lambda_f\otimes K$ suggest that the special values should appear in the transition matrix $T^{F_c}_\mathcal{B}$ but, as one sees from the formula (\ref{eq:1}), due to the scaling factor involved it does not imply that $t_{24},t_{44}\in\Lambda_f\otimes K$ and only that $t_{24},t_{44}\in\alpha^{-1}\cdot\left(\Lambda_f\otimes K\right)$ for $\alpha:=\tfrac{t_{23}}{2g(t_{24}t_{41}-t_{21}t_{44})}$. Nevertheless, numerical experiments suggest that $\alpha\in K$ for all considered operators.

Precise values of the coefficients $t_{ij}\in\mathbb{C}$ and rational parameters $q_i$ can be found in the Appendix, where transition matrices for all considered half-conifold singularities are presented. In total, for Picard-Fuchs operators of families of double octics there are 16 singularities meeting our criteria. We have also verified the conclusion of Theorem \ref{th:num} for several singularities appearing in Picard-Fuchs operators of families of Schoen's fiber products.

\subsection{Transition matrix between Frobenius bases}\ \newline\par

In this paper we considered three bases of solutions: the Doran-Morgan basis $\mathcal{B}$, the Frobenius basis $F_m$ at a MUM point and the Frobenius basis $F_c$ at a half-conifold point. The expected form of the transition matrix $T^{F_m}_\mathcal{B}$ is well known (see Conjecture \ref{conjecture}). It was the aim of this paper to identify the transition matrix $T^{F_c}_\mathcal{B}$. This opens up the possibility of identifying the transition matrix $T^{F_c}_{F_m}=T^{\mathcal{B}}_{F_m}T^{F_c}_{\mathcal{B}}$ between the two local bases.
	
Of course this equality only makes sense when $d_s\neq 0$. Otherwise, we cannot construct the Doran-Morgan basis $\mathcal{B}$ associated to the singularity at $s$. However, the transition matrix $T^{F_c}_{F_m}$ is well defined even when $d_s=0$. Thus, once the form of the transition matrix $T^{F_c}_{F_m}$ is established for half-conifold points with $d_s\neq0$, it is possible to anticipate how it should look for singularities with $d_s=0$.
	
Consider for example operator \textbf{6.15}:
	\begin{center}
		\(\displaystyle {\Theta}^{4}\)
		\mbox{\(\displaystyle\; - \; 2 ^{4}t(56\,{\Theta}^{4}+16\,{\Theta}^{3}+22\,{\Theta}^{2}+14\,\Theta+3)\)}
		\mbox{\(\displaystyle\; + \; 2 ^{10}t^{2}(308\,{\Theta}^{4}+272\,{\Theta}^{3}+347\,{\Theta}^{2}+174\,\Theta+35)\)}
		\mbox{\(\displaystyle\; - \; 2 ^{18}t^{3}(212\,{\Theta}^{4}+384\,{\Theta}^{3}+473\,{\Theta}^{2}+282\,\Theta+69)\)}
		\mbox{\(\displaystyle\; + \; 2 ^{26}t^{4}(77\,{\Theta}^{4}+232\,{\Theta}^{3}+327\,{\Theta}^{2}+226\,\Theta+62)\)}
		\mbox{\(\displaystyle\; - \; 2 ^{35}t^{5}(\Theta+1)^{2}(7\,{\Theta}^{2}+17\,\Theta+13)\)}
		\mbox{\(\displaystyle\; + \; 2 ^{42}t^{6}(\Theta+1)^{2}(\Theta+2)^{2}\)}
	\end{center}
It has half-conifold singularity at $s=\tfrac{1}{128}$ which yields $d_s=0$. Using the heuristic just described we were able to identify
	
	\begin{equation*}
	T^{F_c}_{F_0}=
	\begin{pmatrix}
	\tfrac{4\beta L(f,2)+\pi^2}{2\pi L(f,1)}i&0&\tfrac{\pi i}{8}&\tfrac{\pi L(f,2)}{8}i\\
	\beta&-\tfrac{\pi i}{8}&\tfrac{2-5\log(2)+\pi i}{16}&\tfrac{\pi^2L(f,1)}{16}\\
	\tfrac{-20\beta L(f,2)-5\pi^2}{48\pi L(f,1)}i&0&-\tfrac{\pi i}{96}&-\tfrac{5\pi L(f2)}{192}i\\
	\tfrac{3\pi^4\beta L(f,1)-156\zeta(3)\beta L(f,2)-39\pi^2\zeta(3)}{48\pi^4L(f,1)}&-\tfrac{5\pi i}{192}&\tfrac{10\pi^2-78\zeta(3)-25\pi^2\log(2)+5\pi^3i}{384\pi^2}&\tfrac{\pi^4L(f,1)-39\zeta(3)L(f,2)}{192\pi^2}\\
	\end{pmatrix}
	\end{equation*}
	
\noindent where $\beta\approx172.622999822288172$. Had we not established the general form of $T^{F_c}_{F_m}$ using singularities with $d_s\neq 0$ and their associated Doran-Morgan bases, the individual entries of this matrix probably would be rather hard to identify numerically.

\section{Singularity of type $\tfrac{1}{4}C$}\label{s:14}

In the previous sections we considered the case of half-conifold singularities. From the point of view of the considered questions, they are considerably simpler to analyse than an ordinary conifold point due to the splitting of the space of solutions $\mathcal{S}ol(t_0)$ into monodromy-invariant eigenspaces. When one tries to apply methods from the proof of Theorem \ref{th:la} to identify the transition matrix $T^{F_c}_{\mathcal{B}}$ for singularities of type $C$, we obtain a matrix depending on 10 parameters, instead of 6 in the half-conifold case. Unlike for singularities of type $\tfrac{1}{2}C$, we were unable to numerically identify these parameters in general.

On the other hand, for singularities of type $\tfrac{1}{n}C$ for $n>2$ one expects to see less independent parameters in the transition matrix $T^{F_c}_{\mathcal{B}}$. Thus we can expect that in this case the coefficients will be easier to identify. However, amongst Picard-Fuchs operators of families of double octics, which provided material for our numerical experiments, there is only one operator with this type of singularity. For this reason we do not have enough data to present numerical observations concerning this type of singularities. Nevertheless, let us end this paper with a specific example of operator \textbf{2.47}. It is the only operator associated with a family of double octics with a singularity of type $\tfrac{1}{n}$C for $n>2$. It is also the only example for which we were able to identify the transition matrix $T^{F_c}_\mathcal{B}$ completely.

Operator \textbf{2.47} is given by
	\begin{center}
	$\mathcal{P}$\ =\ 
	\(\displaystyle {\Theta}^{4}\)
	\mbox{\(\displaystyle\; - \; 2 ^{4}t(3072\,{\Theta}^{4}+5120\,{\Theta}^{3}+3904\,{\Theta}^{2}+1344\,\Theta+169)\)}\\
	\mbox{\(\displaystyle\; + \; 2 ^{23}t^{2}(4\,\Theta+3)(24\,{\Theta}^{3}+62\,{\Theta}^{2}+49\,\Theta+9)\)}
	\mbox{\(\displaystyle\; - \; 2 ^{34}t^{3}(4\,\Theta+1)(4\,\Theta+3)(4\,\Theta+7)(4\,\Theta+9)\)}
\end{center}
and has the Riemann scheme
\[\left\{\begin{tabular}{*{3}c}
0& $\frac{1}{16384}$& $\infty$\\ 
\hline
0& 0& 1/4\\
0& 1/4& 3/4\\
0& 1/4& 7/4\\
0& 1/2& 9/4\\
\end{tabular}\right\}\]	
Monodromy matrices around $s=\frac{1}{16384}$ are
\begin{equation*}
M_s^{F_c}=\begin{pmatrix}
1&0&0&0\\0&i&1&0\\0&0&i&0\\0&0&0&-1\\
\end{pmatrix},\quad
M_s^\mathcal{B}=\begin{pmatrix}
i&0&i/4&0\\0&-i&0&i/4\\0&8i&-i&2i\\0&-8i&2i&3i\\
\end{pmatrix}
\end{equation*}
and consequently
\begin{equation*}
T^{F_c}_{\mathcal{B}}=\begin{pmatrix}
a_1&a_2&a_3&a_4\\ia_1&0&ia_2&-ia_4\\-4(1+i)a_1&0&-4ia_2&-4(1-i)a_4\\4(1+i)a_1&0&8ia_2&4(1-i)a_4\\
\end{pmatrix}
\end{equation*}
Analogously to other examples, here we have $a_2=\tfrac{\pi\sqrt{2}}{256}\cdot(-1+i)$ and $a_4=-\tfrac{\pi L(f,2)}{256}$. We were also able to identify $a_3=\tfrac{\pi+(6\log(2)-4)i}{512}\cdot\sqrt{2}(1+i)$. From the fact that $M_0(N_s^4(-v))(s)=\sqrt{2}\tfrac{L(f,2)}{2\pi i}$, we deduce $a_1=\tfrac{\pi^2}{128L(f,2)}i$.

Thus in this case the transition matrix is identified completely and we obtain the following theorem:

\begin{theorem}
With a relative error of at most $10^{-195}$ the transition matrix $T^{F_c}_\mathcal{B}$ from the Frobenius basis at $\tfrac{1}{16384}$ to the Doran-Morgan basis $\mathcal{B}$ for the operator \textbf{2.47} is given by
\begin{equation*}
T^{F_c}_{\mathcal{B}}=\begin{pmatrix}\bigskip
\tfrac{\pi^2}{128L(f,2)}\zeta_8^2&\tfrac{\pi}{128}\zeta_8^3&\tfrac{\pi+(6\log(2)-4)i}{256}\zeta_8&-\tfrac{\pi}{256}L(f,2)\\\bigskip
\tfrac{\pi^2}{128L(f,2)}&0&\tfrac{\pi}{128}\zeta_8^5&\tfrac{\pi}{256}L(f,2)\zeta_8^2\\\bigskip
\tfrac{\sqrt{2}\pi^2}{32L(f,2)}\zeta_8^7&0&\tfrac{\pi}{32}\zeta_8&\tfrac{\pi\sqrt{2}}{64}L(f,2)\zeta_8^7\\\bigskip
\tfrac{\sqrt{2}\pi^2}{32L(f,2)}\zeta_8^3&0&\tfrac{\pi}{16}\zeta_8^5&\tfrac{\pi\sqrt{2}}{64}L(f,2)\zeta_8^3\\
\end{pmatrix}
\end{equation*}
where $\zeta_8=\tfrac{1+i}{\sqrt{2}}$ is a primitive eighth root of unity.
\end{theorem}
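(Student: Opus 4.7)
The plan is to follow exactly the path sketched in Section \ref{s:14}: first reduce $T^{F_c}_{\mathcal{B}}$ to a four-parameter family by imposing the commutation relation with the local monodromy at $s$, and then pin down each parameter by matching high-precision numerical continuations of individual Frobenius solutions against their expansions in the Doran-Morgan basis.

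First I would record $M_s^{F_c}$ and $M_s^{\mathcal{B}}$ in closed form. The former is the Jordan block prescribed by Lemma \ref{lemma_FrCform} (in its $n=4$ variant), while the latter comes from the construction of $\mathcal{B}$: its first vector $B(-v)$ is an eigenvector of $M_s$ with eigenvalue $\zeta_4=i$, and the remaining columns are then forced by $(M_s^{\mathcal{B}})^4 = (M_s^4)^{\mathcal{B}}$ together with the known form of the monodromy around $s^4$ coming from the end of Section \ref{s:trans}. Solving the linear system $T^{F_c}_{\mathcal{B}} M_{s}^{F_c} = M_{s}^{\mathcal{B}} T^{F_c}_{\mathcal{B}}$ over $\mathbb{C}$ then cuts $T^{F_c}_{\mathcal{B}}$ down to the displayed four-parameter shape with free parameters $a_1,a_2,a_3,a_4$.

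Next I would identify the four parameters one at a time, in the same order followed in Section \ref{s:coeff} for operator \textbf{2.17}. The second column of $T^{F_c}_{\mathcal{B}}$ is the $\mathcal{B}$-expansion of the conifold period $y_2 = f_c$; since $B(-v)$ also spans $\operatorname{im}(M_s^4-\mathrm{Id})$, numerical comparison pins down $a_2 = \tfrac{\pi\sqrt{2}}{256}(-1+i)$. For $a_3$ one expands $y_3$ numerically in $\mathcal{B}$; a $\log 2$ term appears, which is absorbed by the same rescaling $t\mapsto \alpha t$ used for \textbf{2.17}, producing the stated closed form. The parameter $a_4$ is fixed by the unique solution $y_4 = O((t-s)^{1/2})$, whose leading $\mathcal{B}$-coefficient is identified as $-\tfrac{\pi L(f,2)}{256}$ by comparison against the special values of the modular form $f$ attached to $X_s$. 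Finally, for $a_1$ I would use the analytic-continuation identity $M_0(N_s^4(-v))(s) = \sqrt{2}\cdot\tfrac{L(f,2)}{2\pi i}$ established by the methods of \cite{Chmiel}: written as $(e_1)^T\cdot T^{\mathcal{B}}_{F_c}\cdot M_0^{\mathcal{B}}\cdot e_1 \cdot (\text{scaling})$ and combined with the values of $a_2, a_3, a_4$ already found, this becomes a single linear equation in $a_1$ whose solution is $a_1 = \tfrac{\pi^2}{128 L(f,2)}\, i$. Rewriting $\pm 1$, $\pm i$ and $\sqrt{2}$ as powers of $\zeta_8 = \tfrac{1+i}{\sqrt{2}}$ then produces the compact form in the statement.

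The main obstacle is not algebraic but numerical: each identification ultimately rests on recognising a complex number (computed to roughly $200$ digits of precision) as an explicit $\mathbb{Q}(\pi, \log 2, \zeta_8, L(f,1), L(f,2))$-combination. The delicate step is the computation of $M_0(N_s^4(-v))(s)$, since this is the only point at which the modular form $f$ enters the otherwise purely local coefficient $a_1$; it requires accurate continuation of a Frobenius solution from a neighbourhood of $s$ all the way around the MUM point at $0$ and back. The remaining parameters $a_2, a_3, a_4$ are local and only demand that the Frobenius expansion at $s$ be evaluated accurately. This is why the statement is only asserted up to relative error $10^{-195}$, and a proof in the strict sense would require a theoretical mechanism---parallel to the mirror-symmetry interpretation of the coefficients at a MUM point---explaining \emph{a priori} why the special values of $L(f,\cdot)$ are forced to appear in $T^{F_c}_{\mathcal{B}}$.
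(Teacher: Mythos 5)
Your proposal follows essentially the same route as the paper's own treatment in Section \ref{s:14}: reduce $T^{F_c}_{\mathcal{B}}$ to four parameters via the commutation relation with $M_s$, identify $a_2$ from the conifold period, $a_3$ from $y_3$ after absorbing the $\log 2$ term by rescaling, $a_4$ from the unique solution in $O((t-s)^{1/2})$, and $a_1$ from the analytic-continuation identity $M_0(N_s^4(-v))(s)=\sqrt{2}\,\tfrac{L(f,2)}{2\pi i}$. Your closing caveat that this is a numerical identification rather than a classical proof also matches the paper's own disclaimer, so there is nothing to add.
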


Note that $L(f,1)$ does not appear in $T^{F_c}_{\mathcal{B}}$; this is due to proportionality of special values: $L(f,1)=8\tfrac{L(f,2)}{2\pi}$. It is interesting that in some sense one could predict that the special values will be proportional from the fact that only four unknown parameters appear in the transition matrix. Geometrically, the singular fiber over $\tfrac{1}{16384}$ is birational to the unique rigid double octic with complex multiplication.

\newpage

\section*{Appendix}

In this Appendix, we collect explicit forms of the transition matrices $T^{F_c}_{\mathcal{B}}$ for Picard-Fuchs operators of families of double octics. This provides a refinement of the Theorem \ref{th:num}. The numbering of the operators follows that of the online database \cite{database}. The numbers in the bracket describe the modular form $f$ associated with the smooth, rigid model of the singular fiber $X_s$ (see \cite{Meyer}); the first number denotes the level of $f$.

\subsection*{Operator \textbf{1.3},  half-conifold singularity at $s=\tfrac{1}{1024}$ (8/1)}\ \newline\par

\bigskip

$T^{F_c}_{\mathcal{B}}=\begin{bmatrix}
\alpha&-\tfrac{\pi}{256}\sqrt{2}&
\tfrac{\pi}{512}\sqrt{2}+\tfrac{(3\ln{2}-1)i}{256}\sqrt{2}&
-\tfrac{\pi}{512}\sqrt{2}L(f,2)\\
\beta&0&
-\tfrac{\pi}{1024}\sqrt{2}&
-\tfrac{\pi^2}{1024}\sqrt{2}L(f,1)\\
-8\beta-8\alpha&0&
\tfrac{\pi}{128}\sqrt{2}&
\tfrac{\pi^2}{128}\sqrt{2}L(f,1)+\tfrac{\pi i}{64}\sqrt{2}L(f,2)\\
8\alpha&0&
-\tfrac{\pi}{128}\sqrt{2}&
-\tfrac{\pi i}{64}\sqrt{2}L(f,2)\\
\end{bmatrix}$

\bigskip

\noindent where
$\beta\approx-6.08158309165313377575$
and
$\alpha:=\tfrac{256\beta L(f,2)+\pi^2\sqrt{2}}{128\pi L(f,1)}\cdot i$

\bigskip

\subsection*{Operator \textbf{2.17},  half-conifold singularity at $s=\tfrac{1}{256}$ (8/1)}\ \newline\par

\bigskip

$T^{F_c}_{\mathcal{B}}=\begin{bmatrix}
\alpha&-\tfrac{\pi}{128}&
\tfrac{\pi}{256}+\tfrac{(2\ln{2}-1)i}{128}&
-\tfrac{\pi^2 i}{256}L(f,1)\\
\beta&0&
-\tfrac{\pi}{512}&
-\tfrac{\pi}{256}L(f,2)\\
-16\beta-8\alpha&0&
\tfrac{\pi}{32}&
\tfrac{\pi^2 i}{32}L(f,1)+\tfrac{\pi}{16}L(f,2)\\
8\beta+8\alpha&0&
-\tfrac{\pi}{32}&
-\tfrac{\pi^2 i}{32}L(f,1)-\tfrac{\pi}{32}L(f,2)\\
\end{bmatrix}$

\bigskip

\noindent where
$\beta\approx-2.86962386860844996439$
and
$\alpha:=\tfrac{128\beta L(f,1)+\pi}{128 L(f,2)}\cdot \pi i$

\bigskip

\subsection*{Operator \textbf{5.47},  half-conifold singularity at $s=-\tfrac{1}{16}$ (32/2)}\ \newline\par

\bigskip

$T^{F_c}_{\mathcal{B}}=\begin{bmatrix}
\alpha&
-\tfrac{\pi i}{864}&
\tfrac{4\ln{2}-3}{2592}-\tfrac{\pi i}{128}&
-\tfrac{\pi^2 i}{5184}L(f,1)\\
\beta&0&
-\tfrac{\pi i}{10368}&
-\tfrac{\pi}{5184}L(f,2)\\
-240\beta-24\alpha&
0&
\tfrac{5\pi i}{216}&
\tfrac{\pi^2 i}{216}L(f,1)+\tfrac{5\pi}{108}L(f,2)\\
144\beta+24\alpha&
0&
-\tfrac{7\pi i}{432}&
-\tfrac{\pi^2 i}{216}L(f,1)-\tfrac{\pi}{36}L(f,2)\\
\end{bmatrix}$

\bigskip

\noindent where
$\beta\approx-0.16069457023018073379$
and
$\alpha:=\tfrac{96\beta L(f,1)+\pi}{96 L(f,2)}\cdot \pi i$

\bigskip

\subsection*{Operator \textbf{5.48}, half-conifold singularity at $s=\tfrac{1}{256}$ (32/2)}\ \newline\par

\bigskip

$T^{F_c}_{\mathcal{B}}=\begin{bmatrix}
\alpha&-\tfrac{\pi}{384}&
\tfrac{\pi}{768}+\tfrac{(10\ln{2}-3)i}{1152}&
-\tfrac{\pi i}{2304}L(f,2)\\
\beta&0&
-\tfrac{\pi}{2304}&
-\tfrac{\pi^2}{18432}L(f,1)\\
-48\beta-12\alpha&0&
\tfrac{\pi}{48}&
\tfrac{\pi^2}{384}L(f,1)+\tfrac{\pi i}{192}L(f,2)\\
48\beta+12\alpha&0&
-\tfrac{5\pi}{192}&
\tfrac{\pi^2}{384}L(f,1)-\tfrac{\pi i}{192}L(f,2)\\
\end{bmatrix}$

\bigskip

\noindent where
$\beta\approx-7.51426632604220544682$
and
$\alpha:=\tfrac{192\beta L(f,2)+\pi^2}{24\pi L(f,1)}\cdot i$

\bigskip

\subsection*{Operator \textbf{5.50},  half-conifold singularity at $s=\tfrac{1}{64}$ (6/1)}\ \newline\par

\bigskip

$T^{F_c}_{\mathcal{B}}=\begin{bmatrix}
\alpha&-\tfrac{\pi}{256}\sqrt{2}&
\tfrac{\pi}{512}\sqrt{2}+\tfrac{(9\ln{2}-4)i}{1024}\sqrt{2}&
-\tfrac{9 \pi i}{2048}\sqrt{2}L(f,2)\\
\beta&0&
-\tfrac{\pi}{2048}\sqrt{2}&
-\tfrac{5\pi^2}{4096}\sqrt{2}L(f,1)\\
-80\beta-16\alpha&0&
\tfrac{5\pi}{128}\sqrt{2}&
\tfrac{25\pi^2}{256}\sqrt{2}L(f,1)+\tfrac{9\pi i}{128}\sqrt{2}L(f,2)\\
48\beta+16\alpha&0&
-\tfrac{5\pi}{16}\sqrt{2}&
\tfrac{15\pi^2}{256}\sqrt{2}L(f,1)-\tfrac{9\pi i}{128}\sqrt{2}L(f,2)\\
\end{bmatrix}$

\bigskip

\noindent where
$\beta\approx-12.76773602982673203281$
and
$\alpha:=\tfrac{1152\beta L(f,2)+\pi^2\sqrt{2}}{320 \pi L(f,1)}\cdot i$

\bigskip

\subsection*{Operator \textbf{5.90},  half-conifold singularity at $s=\tfrac{1}{256}$ (8/1)}\ \newline\par

\bigskip

$T^{F_c}_{\mathcal{B}}=\begin{bmatrix}
\beta+\alpha&\tfrac{\pi i}{384}&
-\tfrac{\pi i}{2304}+\tfrac{10\ln{2}-3}{1152}&
-\tfrac{\pi^2}{2304}L(f,1)-\tfrac{\pi i}{1152}L(f,2)\\
-\tfrac{1}{2}\beta+\tfrac{1}{2}\alpha&0&
\tfrac{\pi i}{2304}&
\tfrac{\pi^2}{4608}L(f,1)+\tfrac{\pi i}{2304}L(f,2)\\
-4\alpha&0&
-\tfrac{\pi i}{576}&
-\tfrac{\pi i}{288}L(f,2)\\
24\alpha&0&
\tfrac{\pi i}{64}&
\tfrac{\pi i}{48}L(f,2)\\
\end{bmatrix}$

\bigskip

\noindent where
$\beta\approx-0.92346087106375608614$
and
$\alpha:=\tfrac{-384\beta L(f,2)+\pi^2}{192\pi L(f,1)}\cdot i$

\bigskip

\subsection*{Operator \textbf{5.114},  half-conifold singularity at $s=\tfrac{1}{1024}$ (12/1)}\ \newline\par

\bigskip

$T^{F_c}_{\mathcal{B}}=\begin{bmatrix}
\alpha&-\tfrac{\pi}{512}&
-\tfrac{\pi}{1024}+\tfrac{(4\ln{2}-1)i}{512}&
-\tfrac{3\pi i}{8192}L(f,2)\\
\beta&0&
-\tfrac{\pi}{2048}&
-\tfrac{5\pi^2}{4096}L(f,1)\\
-16\beta-8\alpha&0&
\tfrac{\pi}{128}&
\tfrac{\pi^2}{512}L(f,1)+\tfrac{3\pi i}{1024}L(f,2)\\
16\beta+8\alpha&0&
-\tfrac{3\pi}{256}&
-\tfrac{\pi^2}{512}L(f,1)-\tfrac{3\pi i}{1024}L(f,2)\\
\end{bmatrix}$

\bigskip

\noindent where
$\beta\approx-2.53398911237591330292$
and
$\alpha:=\tfrac{192\beta L(f,2)+\pi^2}{64\pi L(f,1)}\cdot i$

\bigskip

\subsection*{Operator \textbf{6.11},  half-conifold singularity at $s=\tfrac{1}{1024}$ (32/2)}\ \newline\par

\bigskip

$T^{F_c}_{\mathcal{B}}=\begin{bmatrix}
\beta+\alpha&
\tfrac{\pi i}{320}&
-\tfrac{\pi i}{1600}+\tfrac{54\ln{2}-15}{4800}&
\tfrac{\pi^2i}{12800}L(f,1)-\tfrac{\pi}{3200}L(f,2)\\
-\tfrac{1}{2}\beta+\alpha&0&\tfrac{3\pi i}{3200}&
\tfrac{\pi^2i}{12800}L(f,1)+\tfrac{\pi}{6400}L(f,2)\\
-\beta&0&\tfrac{\pi i}{1600}&
\tfrac{\pi}{3200}L(f,2)\\
-10\beta+40\alpha&0&\tfrac{3\pi i}{80}&
\tfrac{\pi^2i}{320}L(f,1)+\tfrac{\pi}{320}L(f,2)\\
\end{bmatrix}$

\bigskip

\noindent where
$\beta\approx-7.18005014181627297598$
and
$\alpha:=\tfrac{-20\beta L(f,1)+\pi}{80L(f,2)}\cdot \pi i$

\bigskip

\subsection*{Operator \textbf{6.15},  half-conifold singularity at $s=\tfrac{1}{64}$ (8/1)}\ \newline\par

\bigskip

$T^{F_c}_{\mathcal{B}}=\begin{bmatrix}
\beta+\alpha&
\tfrac{\pi i}{384}&
-\tfrac{\pi i}{1152}+\tfrac{8\ln{2}-3}{1152}&
-\tfrac{\pi^2}{1152}L(f,1)+\tfrac{\pi i}{1152}L(f,2)\\
-\tfrac{1}{4}\beta+\tfrac{1}{2}\alpha&0&-\tfrac{\pi i}{3072}&
\tfrac{\pi^2}{4608}L(f,1)+\tfrac{\pi i}{2304}L(f,2)\\
2\beta-6\alpha&0&-\tfrac{4\pi i}{1152}&
-\tfrac{\pi^2}{576}L(f,1)-\tfrac{\pi i}{192}L(f,2)\\
-48\beta+144\alpha&0&\tfrac{17\pi i}{192}&
\tfrac{\pi^2}{24}L(f,1)-\tfrac{\pi i}{8}L(f,2)\\
\end{bmatrix}$

\bigskip

\noindent where
$\beta\approx-0.92346087106375608614$
and
$\alpha:=\tfrac{-384\beta L(f,2)+\pi^2}{384\pi L(f,1)}\cdot i$

\bigskip

\subsection*{Operator \textbf{8.53},  half-conifold singularity at $s=\tfrac{1}{64}$ (8/1)}\ \newline\par

\bigskip

$T^{F_c}_{\mathcal{B}}=\begin{bmatrix}
\alpha&-\tfrac{\pi}{320}&\tfrac{\pi}{640}+\tfrac{(13\ln{2}-5)i}{1600}&-\tfrac{\pi^2i}{800}L(f,1)\\
\beta&0&-\tfrac{\pi}{3200}&-\tfrac{\pi}{1600}L(f,2)\\
-160\beta-20\alpha&0&-\tfrac{\pi}{20}&\tfrac{\pi}{10}L(f,2)+\tfrac{\pi^2i}{40}L(f,1)\\
120\beta+20\alpha&0&-\tfrac{7\pi}{160}&\tfrac{3\pi}{40}L(f,2)-\tfrac{\pi^2i}{40}L(f,1)\\
\end{bmatrix}$

\bigskip

\noindent where
$\beta\approx-0.15770778182092469784$
and
$\alpha:=\tfrac{640\beta L(f,1)+\pi}{320L(f2)}\cdot\pi i$

\bigskip

\subsection*{Operator \textbf{12.2},  half-conifold singularity at $s=\tfrac{1}{64}$ (8/1)}\ \newline\par

\bigskip

$T^{F_c}_{\mathcal{B}}=\begin{bmatrix}
\beta+\alpha&\tfrac{\pi}{128}&
\tfrac{9\ln{2}-4}{512}&
\tfrac{\pi^2 i}{256}L(f,1)-\tfrac{\pi}{256}L(f,2)\\
-\tfrac{1}{2}\beta+\tfrac{1}{2}\alpha&0&
\tfrac{\pi i}{512}&
\tfrac{\pi^2 i}{512}L(f,1)+\tfrac{\pi}{10512}L(f,2)\\
-4\alpha&0&
-\tfrac{\pi i}{128}&
-\tfrac{\pi^2 i}{64}L(f,1)\\
-8\beta+24\alpha&0&
\tfrac{5\pi i}{64}&
\tfrac{3\pi^2 i}{32}L(f,1)-\tfrac{\pi}{32}L(f,2)\\
\end{bmatrix}$

\bigskip

\noindent where
$\beta\approx-3.76925393189203814003$
and
$\alpha:=\tfrac{128\beta L(f,1)-\pi}{128 L(f,2)}\cdot \pi i$

\bigskip

\subsection*{Operator \textbf{12.3},  half-conifold singularity at $s=\tfrac{1}{16}$ (8/1)}\ \newline\par

\bigskip

$T^{F_c}_{\mathcal{B}}=\begin{bmatrix}
\alpha&-\tfrac{\pi}{64}&
\tfrac{\pi}{128}+\tfrac{(5\ln{2}-4)i}{256}&
-\tfrac{\pi^2 i}{64}L(f,1)\\
\beta&0&
-\tfrac{\pi}{512}&
\tfrac{\pi}{128}L(f,2)\\
-160\beta-16\alpha&0&
\tfrac{5\pi}{16}&
\tfrac{\pi^2 i}{4}L(f,1)+\tfrac{5\pi}{4}L(f,2)\\
112\beta+16\alpha&0&
-\tfrac{\pi}{4}&
-\tfrac{\pi^2 i}{4}L(f,1)-\tfrac{7\pi}{88}L(f,2)\\
\end{bmatrix}$

\bigskip

\noindent where
$\beta\approx-1.23158098122767942798$
and
$\alpha:=\tfrac{256\beta L(f,1)+\pi}{128L(f,2)}\cdot \pi i$

\bigskip

\subsection*{Operator \textbf{12.4},  half-conifold singularity at $s=\tfrac{1}{16}$ (8/1)}\ \newline\par

\bigskip

$T^{F_c}_{\mathcal{B}}=\begin{bmatrix}
\alpha&-\tfrac{\pi}{224}&
\tfrac{\pi}{448}-\tfrac{(12\ln{2}+7)i}{1568}&
-\tfrac{\pi i}{392}L(f,2)\\
\beta&0&
-\tfrac{\pi}{3136}&
\tfrac{\pi^2}{1568}L(f,1)\\
-560\beta-28\alpha&0&
\tfrac{5\pi}{28}&
\tfrac{5\pi^2}{14}L(f,1)+\tfrac{\pi i}{14}L(f,2)\\
392\beta+28\alpha&0&
-\tfrac{15\pi}{112}&
-\tfrac{\pi^2}{4}L(f,1)-\tfrac{\pi i}{14}L(f,2)\\
\end{bmatrix}$

\bigskip

\noindent where
$\beta\approx-0.08007994775244181798$
and
$\alpha:=\tfrac{1792\beta L(f,2)+\pi^2}{224\pi L(f,1)}\cdot i$

\bigskip

\subsection*{Operator \textbf{12.7},  half-conifold singularity at $s=-\tfrac{1}{512}$ (8/1)}\ \newline\par

\bigskip

$T^{F_c}_{\mathcal{B}}=\begin{bmatrix}
\alpha&\tfrac{\pi i}{512}&
\tfrac{3\ln{2}-1}{512}&
\tfrac{\pi i}{2048}\sqrt{2}L(f,2)\\
\beta+\tfrac{1}{4}\alpha&0&
\tfrac{\pi i}{1024}&
\tfrac{\pi^2}{4096}\sqrt{2}L(f,1)+\tfrac{\pi i}{8192}\sqrt{2}L(f,2)\\
-4\alpha&0&
0&
-\tfrac{\pi^ i}{512}\sqrt{2}L(f,2)\\
16\beta+8\alpha&0&
\tfrac{5\pi i}{256}&
\tfrac{3\pi^2}{256}\sqrt{2}L(f,1)-\tfrac{\pi i}{256}\sqrt{2}L(f,2)\\
\end{bmatrix}$

\bigskip

\noindent where
$\beta\approx6.59772907683923133217$
and
$\alpha:=\tfrac{256\beta L(f,2)+\pi^2\sqrt{2}}{128\pi L(f,1)}\cdot i$

\bigskip

\subsection*{Operator \textbf{12.7},  half-conifold singularity at $s=-\tfrac{1}{256}$ (8/1)}\ \newline\par

\bigskip

$T^{F_c}_{\mathcal{B}}=\begin{bmatrix}
\beta+\alpha&-\tfrac{\pi}{896}&
\tfrac{\pi}{12544}+\tfrac{(26\ln{2}-7)i}{6272}&
-\tfrac{\pi^2 i}{12544}L(f,1)+\tfrac{\pi}{3136}L(f,2)\\
-\tfrac{1}{4}\beta+\alpha&0&
\tfrac{5\pi}{25088}&
-\tfrac{\pi^2 i}{12544}L(f,1)-\tfrac{\pi}{12544}L(f,2)\\
28\beta-168\alpha&0&
-\tfrac{\pi}{32}&
\tfrac{3\pi^2 i}{224}L(f,1)+\tfrac{\pi}{112}L(f,2)\\
-42\beta+244\alpha&0&
\tfrac{5\pi}{112}&
-\tfrac{\pi^2 i}{56}L(f,1)-\tfrac{3\pi}{224}L(f,2)\\
\end{bmatrix}$

\bigskip

\noindent where
$\beta\approx-2.40508365817870201311$
and
$\alpha:=\tfrac{-224\beta L(f,1)+\pi}{896L(f,2)}\cdot \pi i$

\bigskip

\subsection*{Operator \textbf{12.8},  half-conifold singularity at $s=-\tfrac{1}{256}$ (32/2)}\ \newline\par

\bigskip

$T^{F_c}_{\mathcal{B}}=\begin{bmatrix}
\beta+\alpha&\tfrac{\pi}{640}&
\tfrac{3\pi}{6400}+\tfrac{(18\ln{2}-5)i}{3200}&
\tfrac{\pi^2}{25600}L(f,1)-\tfrac{\pi i}{6400}L(f,2)\\
-\tfrac{1}{2}\beta+\tfrac{1}{2}\alpha&0&
\tfrac{\pi}{3200}&
-\tfrac{\pi^2}{51200}L(f,1)-\tfrac{\pi i}{12800}L(f,2)\\
\tfrac{1}{4}\beta-\tfrac{3}{8}\alpha&0&
-\tfrac{\pi}{5120}&
\tfrac{\pi^2}{102400}L(f,1)+\tfrac{3\pi i}{56200}L(f,2)\\
-60\beta+80\alpha&0&
\tfrac{3\pi}{64}&
-\tfrac{3\pi^2}{1280}L(f,1)-\tfrac{\pi i}{80}L(f,2)\\
\end{bmatrix}$

\bigskip

\noindent where
$\beta\approx-1.28003826199452498547$
and
$\alpha:=\tfrac{-160\beta L(f,2)+\pi^2}{40\pi L(f,1)}\cdot i$
 
\end{document}